\theoremstyle{plain}      
    \newtheorem{theorem}{Theorem}[section]
    \newtheorem{proposition}[theorem]{Proposition}
    \newtheorem{corollary}[theorem]{Corollary}
\theoremstyle{definition}
    \newtheorem{definition}[theorem]{Definition}
\theoremstyle{remark}
    \newtheorem*{remark}{Remark}
\newcommand{\A}{\ensuremath{\mathscr{A}}}
\newcommand{\C}{\ensuremath{\mathscr{C}}}
\newcommand{\Set}{\ensuremath{\mathbf{Set}}}
\renewcommand{\phi}{\ensuremath{\varphi}}
\newcommand{\ox}{\ensuremath{\otimes}}
\newcommand{\op}{\ensuremath{\mathrm{op}}}
\renewcommand{\o}{\ensuremath{\circ}}
\newcommand{\lraise}[2]{\ensuremath{{}^{#1} \! {#2}}}
\newcommand{\MN}{\ensuremath{{}^M \! N}}
\newcommand{\ra}{\ensuremath{\xymatrix@1@=22pt{\ar[r]&}}}
\newcommand{\riso}{\ensuremath{\xymatrix@1@=27pt{\ar[r]^-\cong&}}}
\begin{document}
\title{Closed categories, star-autonomy, and monoidal comonads}
\author{Craig Pastro}
\author{Ross Street}
\thanks{The first author gratefully acknowledges support of an international
Macquarie University Research Scholarship while the second gratefully
acknowledges support of the Australian Research Council Discovery Grant
DP0771252.}
\address{Department of Mathematics \\
         Macquarie University \\
         New South Wales 2109 Australia}
\email{\{craig, street\}@ics.mq.edu.au}
\date{\today}
\dedicatory{Dedicated to Gus Lehrer on his 60th birthday}
%\subclass{18D10; 18D15; 18C20}
\keywords{Antipode, $*$-autonomy, monad, closed category, internal hom,
quantum groupoid, bialgebroid.}
\begin{abstract}
This paper determines what structure is needed for internal homs in a
monoidal category $\C$ to be liftable to the category $\C^G$ of
Eilenberg-Moore coalgebras for a monoidal comonad $G$ on $\C$. We apply
this to lift $*$-autonomy with the view to recasting the definition of
quantum groupoid.
\end{abstract}
\maketitle
%=========================================================================%

%=========================================================================%
\section*{Introduction}
%=========================================================================%

It was recognized by Szlach\'anyi~\cite{Sz03} that Takeuchi's
$\times_R$-bialgebras (bialgebroids) could be described as opmonoidal
monads. Brzezi\'nski and Militaru~\cite{BM02} developed this further and
dualized the notion. The dual concept was called quantum category
in~\cite{DS} and was expressed in terms of a monoidal comonad; however
the main point of the paper was to obtain a definition of quantum groupoid
which involved $*$-autonomy in the sense of Barr~\cite{B}. This
$*$-autonomy amounts to an antipode in the case of a bialgebra (which is
a ``one object'' quantum category). The paper~\cite{DS} expressed the
generalized antipode as a structure on a generating monoidal adjunction
(``basic data'') for the comonad, rather than giving this antipode in terms
of the monoidal comonad itself. Motivation for the present paper was to
clarify the latter possibility.

The problem leads to one that can be stated for monads $T$ on ordinary
monoidal categories $\A$. It was pointed out in~\cite{M} that the
category $\A^T$ of Eilenberg-Moore algebras for an opmonoidal monad $T$
becomes monoidal in such a way that the underlying functor $U:\A^T \ra
\A$ becomes strict monoidal. We ask when internal homs in $\A$ can be lifted
to $\A^T$. More specifically, we ask under what extra structure on $T$ does
the Eilenberg-Moore category $\A^T$ become $*$-autonomous if $\A$ is.

In the meantime, the paper~\cite{BV} came to our notice, solving the
autonomous case. An autonomous category in the sense of~\cite{JS} (also,
well before that, called ``compact'' and ``rigid'' in the symmetric case)
admits a left and right dual for each object. A common generalization of
antipode for a bialgebra and autonomy for a monoidal category was obtained
in~\cite{DMS} and called ``dualization''. The concept of antipode $\nu$ for
an opmonoidal monad $T$ on an autonomous monoidal category $\A$ is defined
in~\cite{BV} and the pair $(T,\nu)$ is there called a ``Hopf monad''.
Autonomy is a special case of $*$-autonomy so~\cite{BV} answered our
questions in an important special case.

Our present paper answers the question of lifting $*$-autonomy. Our
motivation from quantum groupoids causes us to write in terms of a monoidal
comonad $G$ on a monoidal category $\C$ rather than an opmonoidal monad $T$.
Since we are interested in abstracting our work to monoidal comonads in
monoidal bicategories, this duality is not a serious point of difference.
There are some new subtleties required in the non-autonomous case arising
from the lack of unit and counit morphisms involved with duals in $\C$;
we must be content with the coevaluation and evaluation morphisms associated
with the weaker duality of $*$-autonomy.

In Section~\ref{sec-raisers} we review closed and $*$-autonomous categories
from the point of view of what we are calling ``raisers''. Section~\ref{sec-mc}
reviews monoidal comonads and describes what is required to lift a raiser
from a category $\C$ to the category $\C^G$ of Eilenberg-Moore
$G$-coalgebras for a monoidal comonad $G$. In Section~\ref{sec-starmc} we
define what it means for a monoidal comonad $G$ to be (left) $*$-autonomous
and prove the main result of our paper, viz., that $\C^G$ is (left)
$*$-autonomous if $G$ is. Section~\ref{sec-moncom} starts from a monoidal
adjunction and investigates what is required on the adjunction to reproduce
the results of Section~\ref{sec-starmc} for the induced comonad. In
Section~\ref{sec-hopfeg} we show that a Hopf algebra in a braided
$*$-autonomous category gives an example of a $*$-autonomous comonad.

We would like to thank Brian Day and Steve Lack for several helpful
suggestions.

%=========================================================================%
\section{Internal homs and raisers} \label{sec-raisers}
%=========================================================================%

Let $D$ be an object of a monoidal category $\C$. A \emph{left internal hom}
for objects $B$ and $D$ is a representing object $\lraise{B}{D}$ (or
$[B,D]_l$) for the functor
\[
    \C(- \ox B,D):\C^\op \ra \Set.
\]
This means that the object $\lraise{B}{D}$ comes equipped with an isomorphism
\[
    \varpi_{A,B}:\C(A,\lraise{B}{D}) \riso \C(A \ox B,D)
\]
which is natural in $A \in \C$. By taking $A = \lraise{B}{D}$ and evaluating
at the identity, we obtain an \emph{evaluation morphism}
\[
    e_B:\lraise{B}{D} \ox B \ra D.
\]
By the Yoneda lemma, $\varpi_{A,B}$ is recaptured as the composite
\[
\xygraph{{\C(A,\lraise{B}{D})}
    :[r(2.8)] {\C(A \ox B,\lraise{B}{D} \ox B)} ^-{- \ox B}
    :[r(3.2)] {\C(A \ox B,D)} ^-{\C(1,e_B)}}.
\]

For $B = I$, the unit for $\ox$ on $\C$, we always have the choice
$\lraise{I}{D} = D$ with $e_I:D \ox I \ra D$ equal to the right unit
isomorphism.

Our object $D$ is called a \emph{left raiser} when there is a choice of
$\lraise{B}{D}$ for all $B \in \C$. Again by Yoneda, we obtain a unique
functor
\[
    S = \lraise{-}{D}:\C^\op \ra \C
\]
defined on objects by $SB = \lraise{B}{D}$ and such that $\varpi_{A,B}$ becomes
natural in $B \in \C$. This last is equivalent to saying that $e_B$ is
natural in $B$ in the sense of Eilenberg-Kelly~\cite{EK}. We can easily
modify the tensor product to make the unit $I$ strict, so we can ensure that
\[
    SI = D, \quad \varpi_{A,I} = 1_{\C(A,D)}, \quad \text{and} \quad e_I = 1_D.
\]

The composite natural isomorphism
\begin{multline*}
\xygraph{{\C(A,S(B \ox C))}
    :[r(3.6)] {\C(A \ox (B \ox C),D)} ^-{\varpi_{A,B \ox C}}} \\
\xygraph{:[r(2.2)] {\C((A \ox B) \ox C,D)} ^-\cong
    :[r(3.4)] {\C(A \ox B, SC)} ^-{\varpi^{-1}_{A \ox B,C}}}
\end{multline*}
will be denoted by
\[
    \omega_{A,B,C}:\C(A,S(B \ox C)) \riso \C(A \ox B,SC)~.
\]
It follows that, if $D$ is a left raiser, so too is $SC = \lraise{C}{D}$ with
$\lraise{B}{SC} = S(B \ox C)$. Note also that $\omega_{A,I,C} = 1_{\C(A,SC)}$
and $\omega_{A,B,I} = \varpi_{A,B}$.

Assume that $D$ is a left raiser for the remainder of this section.

By taking $A = S(B \ox C)$ and evaluating at the identity, the isomorphism
$\omega_{A,B,C}$ defines a morphism
\[
    e_{B,C}:S(B \ox C) \ox B \ra SC.
\]
By Yoneda, $\omega_{A,B,C}$ is recovered as the composite
\[
\xygraph{{\C(A,S(B \ox C))}
    :[r(3.6)] {\C(A \ox B,S(B \ox C) \ox B)} ^-{- \ox B}
    :[r(3.9)] {\C(A \ox B,SC)} ^-{\C(1,e_{B,C})}}.
\]
In particular, $e_{I,C} = 1_{SC}$. From the definition of $\omega$ in terms
of $\varpi$ and $\varpi^{-1}$, we obtain the commutativity of the
triangle\footnote{We are now writing as if $\C$ were strict monoidal,
however this is not necessary.}
\[
\vcenter{\xygraph{{S(B \ox C) \ox B \ox C}="0"
    :[r(3)] {SC \ox C} ^-{e_{B,C} \ox 1}
    :[l(1.4)d] {D.}="1" ^-{e_C}
 "0":"1" _{e_{B \ox C}}}}
\]
In particular, $e_{B,I} = e_B:SB \ox B \ra D$.

We define a natural isomorphism
\[
    \rho_{A,B} = \omega^{-1}_{I,A,B}:\C(A,SB) \riso \C(I,S(A \ox B))~.
\]
Taking $A = SB$ and evaluating at the identity, we obtain a morphism
\[
    n_B : I \ra S(SB \ox B)
\]
natural in $B$. By Yoneda, $\rho_{A,B}$ is the composite
\[
\xygraph{{\C(A,SB)}
    :[r(3.6)] {\C(S(SB \ox B),S(A \ox B))} ^-{S(- \ox B)}
    :[r(4)] {\C(I,S(A \ox B))} ^-{\C(n_B,1)}}.
\]
Using the formula for $\omega_{I,A,B}$ in terms of $e_{A,B}$, we obtain the
commutativity of the triangle
\[
\vcenter{\xygraph{{SB}="s"
    :[r(2.4)] {S(SB \ox B) \ox SB} ^-{n_B \ox 1}
    :[l(1.2)d] {SB.}="t" ^-{e_{SB,B}}
 "s":"t" _-{1_{SB}}}}
\]

\begin{proposition}\label{prop1}
The following triangle commutes.
\[
\xygraph{{\C(I,S(A \ox B \ox C))}="s"
    :[r(3.8)] {\C(A,S(B \ox C))} ^-{\omega_{I,A,B \ox C}}
    :[l(1.9)d(1.2)] {\C(A \ox B,SC)}="t" ^-{\omega_{A,B,C}}
 "s":"t" _-{\omega_{I,A \ox B,C}}}
\]
\end{proposition}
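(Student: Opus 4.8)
The plan is to unfold every edge of the triangle using the definition of $\omega$ as the composite built from $\varpi$, $\varpi^{-1}$, and the associativity constraint, and then to watch for a cancellation of a $\varpi^{-1}$ against a matching $\varpi$. Following the footnote, I would run the computation as though $\C$ were strict monoidal and $I$ strict; the general case then follows by Mac Lane's coherence theorem, since every auxiliary isomorphism appearing below is assembled from associativity and unit constraints alone.

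In the strict setting the defining composite reads $\omega_{A',B',C'} = \varpi^{-1}_{A' \ox B',\, C'} \circ \varpi_{A',\, B' \ox C'}$, because the reassociation $\C(A' \ox (B' \ox C'),D) \cong \C((A' \ox B') \ox C',D)$ is the identity. Specializing, and using $I \ox X = X$, the three edges of the triangle become $\omega_{I,A,B \ox C} = \varpi^{-1}_{A,\, B \ox C} \circ \varpi_{I,\, A \ox B \ox C}$, next $\omega_{A,B,C} = \varpi^{-1}_{A \ox B,\, C} \circ \varpi_{A,\, B \ox C}$, and finally the bottom edge $\omega_{I,A \ox B,C} = \varpi^{-1}_{A \ox B,\, C} \circ \varpi_{I,\, A \ox B \ox C}$.

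Composing along the top and then the right-hand edge gives $\omega_{A,B,C} \circ \omega_{I,A,B \ox C} = \varpi^{-1}_{A \ox B,\, C} \circ \varpi_{A,\, B \ox C} \circ \varpi^{-1}_{A,\, B \ox C} \circ \varpi_{I,\, A \ox B \ox C}$. The inner pair $\varpi_{A,\, B \ox C} \circ \varpi^{-1}_{A,\, B \ox C}$ is the identity on $\C(A \ox B \ox C, D)$, so the four-fold composite collapses to $\varpi^{-1}_{A \ox B,\, C} \circ \varpi_{I,\, A \ox B \ox C}$, which is exactly the bottom edge $\omega_{I,A \ox B,C}$. This establishes commutativity.

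The only genuine bookkeeping, and thus the main obstacle in the non-strict case, is tracking the associativity and unit isomorphisms I have suppressed: honestly each $\omega$ carries a $\C(a,D)$ in its middle, and the identifications $I \ox X \cong X$ must be reinstated. These all cohere by Mac Lane's theorem, and crucially the single cancellation of $\varpi^{-1}_{A,\, B \ox C}$ against $\varpi_{A,\, B \ox C}$ is untouched by them, so the argument carries over verbatim. I would therefore present the strict computation as the proof and cite coherence for the general statement.
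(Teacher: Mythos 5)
Your proposal is correct and is essentially identical to the paper's own proof: both expand each $\omega$ as $\varpi^{-1}\circ\varpi$ (working as if $\C$ were strict, per the footnote) and cancel the inner pair $\varpi_{A,B\ox C}\circ\varpi^{-1}_{A,B\ox C}$. Your extra remarks on coherence are a harmless elaboration of what the paper leaves implicit.
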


\begin{proof}
This is verified by the following calculation.
\begin{align*}
\omega_{A,B,C}~\omega_{I,A,B \ox C}
    &= \varpi^{-1}_{A \ox B,C}~\varpi_{A,B \ox C}~\varpi^{-1}_{A,B \ox C}~
    \varpi_{I,A \ox B \ox C} \\
    &= \varpi^{-1}_{A \ox B,C}~\varpi_{I,A \ox B \ox C} \\
    &= \omega_{I,A \ox B,C}
\end{align*}
\end{proof}

\begin{corollary}\label{cor2}
The following triangles commute.
\begin{enumerate}[{\upshape (i)}]
\item
\[
\xygraph{{\C(A,S(B \ox C))}="s"
    :[r(3.2)] {\C(A \ox B,SC)} ^-{\omega_{A,B,C}}
    :[l(1.5)d(1.2)] {\C(I,S(A \ox B \ox C))}="t" ^-{\rho_{A \ox B,C}}
 "s":"t" _-{\rho_{A,B \ox C}}}
\]

\item 
\[
\xygraph{{I}="s"
    :[r(2)] {S(SB \ox B)} ^-{n_B} 
    :[d(1.1)l(0.7)] {S(S(A \ox B) \ox A \ox B)}="t" ^-{S(e_{A,B})}
 "s":"t" _-{n_{A \ox B}}}
\]

\item 
\[
\xygraph{{I}="s"
    :[r(1.6)] {SSI} ^-{n_I}
    :[l(0.8)d(1)] {S(SA \ox A)}="t" ^-{S(e_A)}
 "s":"t" _-{n_A}}
\]
\end{enumerate}
\end{corollary}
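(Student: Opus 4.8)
The plan is to derive all three triangles from Proposition~\ref{prop1} together with the two Yoneda recovery formulas recorded above, namely $\omega_{A,B,C}(g) = e_{B,C}\,(g\ox B)$ and $\rho_{A,B}(f) = S(f\ox B)\,n_B$, plus the defining equations $e_{B,C} = \omega_{S(B\ox C),B,C}(1)$ and $n_B = \rho_{SB,B}(1)$.

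For (i) I would work purely formally, with no diagram chase. Recalling $\rho_{X,Y} = \omega^{-1}_{I,X,Y}$ and reading Proposition~\ref{prop1} as $\omega_{I,A\ox B,C} = \omega_{A,B,C}\,\omega_{I,A,B\ox C}$, I would invert both sides to get $\rho_{A\ox B,C} = \rho_{A,B\ox C}\,\omega^{-1}_{A,B,C}$ and then compose on the right with $\omega_{A,B,C}$. The inverse cancels, leaving $\rho_{A\ox B,C}\,\omega_{A,B,C} = \rho_{A,B\ox C}$, which is exactly triangle (i). This is just the cancellation of a natural isomorphism against its inverse.

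For (ii) I would evaluate the identity (i) at a single element. Both of its sides are natural isomorphisms $\C(A,S(B\ox C)) \to \C(I,S(A\ox B\ox C))$, so I put $A = S(B\ox C)$ and feed in $1_{S(B\ox C)}$. On the right, $\rho_{A,B\ox C}(1) = n_{B\ox C}$ by the definition of $n$. On the left, $\omega_{A,B,C}(1) = e_{B,C}$ by the definition of $e_{B,C}$, and applying $\rho_{-,C}$ to $e_{B,C}$ through the recovery formula gives $S(e_{B,C}\ox C)\,n_C$. Equating the two yields $n_{B\ox C} = S(e_{B,C}\ox C)\,n_C$, which is triangle (ii) after relabelling $(B,C)$ as $(A,B)$ (the displayed arrow $S(e_{A,B})$ being the tensored evaluation $S(e_{A,B}\ox B)$, as its source and target require). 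Triangle (iii) then falls out by specializing (ii) to $B = I$: using $e_{A,I} = e_A$, $SI\ox I = SI$ so that $n_I : I \to SSI$, and the strictness conventions $A\ox I = A$ and $S(A\ox I) = SA$, the identity collapses to $n_A = S(e_A)\,n_I$.

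I expect the evaluation step in (ii) to be the only real obstacle: one must apply the recovery formula in the correct variable slot and carry the spectator factor $-\ox C$ (together with the associativity constraints hidden by the strictness convention) through without error. Parts (i) and (iii) are then just formal cancellation and specialization, so once (i) is set up as an equality of natural isomorphisms the remaining work is bookkeeping rather than genuine difficulty.
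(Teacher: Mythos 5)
Your argument is correct; the paper gives no explicit proof of this corollary (it is stated as an immediate consequence of Proposition~\ref{prop1}), and your route --- inverting the identity of Proposition~\ref{prop1} to obtain (i), evaluating at $1_{S(B\ox C)}$ via the Yoneda recovery formulas for $\omega$ and $\rho$ to obtain (ii), and specializing $B=I$ with the strictness conventions $e_{A,I}=e_A$, $SI=D$ to obtain (iii) --- is exactly the intended one. The one notational subtlety, which you correctly flag, is that the arrow labelled $S(e_{A,B})$ must be read as $S(e_{A,B}\ox 1_B)$ for its source and target to match.
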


\begin{proposition}\label{prop3}
The inverse of $\varpi_{I,B}$ is the composite
\[
\xygraph{{\C(B,SI)}
    :[r(2.2)] {\C(SSI,SB)} ^-S
    :[r(2.6)] {\C(I,SB)} ^-{\C(n_I,1)}}.
\]
\end{proposition}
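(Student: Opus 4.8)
The plan is to identify the asserted composite with $\rho_{B,I}$ and then to observe that $\rho_{B,I}$ is nothing but $\varpi_{I,B}^{-1}$. This reduces the whole proposition to two index specializations together with the strict-unit normalizations already in force.

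First I would read off, from the normalizations recorded earlier, that $\rho_{B,I} = \varpi_{I,B}^{-1}$. Indeed, the definition $\rho_{A,B} = \omega^{-1}_{I,A,B}$ specialized to $A = B$ and $B = I$ gives $\rho_{B,I} = \omega^{-1}_{I,B,I}$, and since $\omega_{A,B,I} = \varpi_{A,B}$ we have $\omega_{I,B,I} = \varpi_{I,B}$; hence
\[
\rho_{B,I} = \omega^{-1}_{I,B,I} = \varpi_{I,B}^{-1}.
\]
So it remains only to check that $\rho_{B,I}$ equals the displayed composite.

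For this I would invoke the Yoneda description of $\rho_{A,B}$ obtained just above the statement, namely
\[
\C(A,SB) \xrightarrow{S(-\ox B)} \C(S(SB \ox B), S(A \ox B)) \xrightarrow{\C(n_B,1)} \C(I, S(A \ox B)),
\]
and substitute $A = B$, $B = I$. Using the strict-unit normalizations ($SI = D$ and $X \ox I = X$) together with the definition $n_I : I \ra S(SI \ox I) = SSI$, the first factor $S(-\ox I)$ collapses to the hom-set action $S : \C(B,SI) \ra \C(SSI, SB)$ of the functor, while the second becomes $\C(n_I,1):\C(SSI,SB) \ra \C(I,SB)$. This is precisely the composite in the statement, completing the argument.

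The only delicate point, and the one I would spell out rather than leave implicit, is the bookkeeping of these unit identifications: that $S(SI \ox I)$ is literally $SSI$, that $f \ox I = f$ so $S(-\ox I)$ really is $S$ on morphisms, and that $n_I$ has codomain $SSI$. None of this is a genuine obstacle once the normalizations $SI = D$, $\varpi_{A,I} = 1$ and $e_I = 1_D$ are assumed; all the actual content of the proposition sits in the single observation $\rho_{B,I} = \varpi_{I,B}^{-1}$.
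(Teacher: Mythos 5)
Your argument is exactly the paper's: identify the displayed composite with $\rho_{B,I}$ via the Yoneda formula for $\rho$ in terms of $n$ specialized at the unit, and note that $\rho_{B,I} = \omega^{-1}_{I,B,I} = \varpi_{I,B}^{-1}$. You have merely spelled out the unit-normalization bookkeeping that the paper leaves implicit; the proof is correct and takes the same route.
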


\begin{proof}
$\varpi_{I,B} = \omega_{I,B,I}$ has inverse $\rho_{B,I}$ and this composite
is the formula for $\rho_{B,I}$ in terms of $n_I$.
\end{proof}

\begin{corollary}
\begin{enumerate}[{\upshape (i)}]
\item The inverse of $\varpi_{A,B}$ is the composite
\begin{multline*}
\xygraph{{\C(A \ox B,D)}
    :[r(2.8)] {\C(SD,S(A \ox B))} ^-S
    :[r(3.4)] {\C(I,S(A \ox B))} ^-{\C(n_I,1)}} \\
\xygraph{
    :[r(2.5)] {\C(A,S(A \ox B) \ox A)} ^-{- \ox A}
    :[r(3.5)] {\C(A,SB)} ^-{\C(1,e_{A,B})}}.
\end{multline*}

\item A left inverse for $S:\C(B,SI) \ra \C(SSI,SB)$ is the composite
\[
\xygraph{{\C(SSI,SB)}
    :[r(2.6)] {\C(I,SB)} ^-{\C(n_I,1)}
    :[r(2.2)] {\C(B,SI)} ^-{\varpi_{I,B}}}.
\]
\end{enumerate}
\end{corollary}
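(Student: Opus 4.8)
The plan is to treat the two parts separately: part (ii) falls out immediately from Proposition~\ref{prop3}, while part (i) is obtained by recognising the displayed four-arrow composite as a composite of two of the isomorphisms already named and then invoking Corollary~\ref{cor2}(i).

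For part (ii) I expect no new work. Proposition~\ref{prop3} asserts exactly that $\C(n_I,1)\circ S=\varpi^{-1}_{I,B}$ as maps $\C(B,SI)\to\C(I,SB)$. Post-composing both sides with $\varpi_{I,B}$ then gives $\varpi_{I,B}\circ\C(n_I,1)\circ S=1_{\C(B,SI)}$, which is precisely the claim that $\varpi_{I,B}\circ\C(n_I,1)$ is a left inverse for $S$. I would add the remark that only a left, and not a two-sided, inverse is asserted because $S\colon\C(B,SI)\to\C(SSI,SB)$ need not be surjective on arrows.

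For part (i) the idea is to factor the composite. First I would read off its first two arrows,
\[
\C(A\ox B,D)\xrightarrow{\;S\;}\C(SD,S(A\ox B))\xrightarrow{\;\C(n_I,1)\;}\C(I,S(A\ox B)),
\]
and identify this segment, using $SI=D$, $B\ox I=B$, and the Yoneda formula $\rho_{X,Y}=\C(n_Y,1)\circ S(-\ox Y)$, as $\rho_{A\ox B,I}$. Then I would read off the last two arrows,
\[
\C(I,S(A\ox B))\xrightarrow{\;-\ox A\;}\C(A,S(A\ox B)\ox A)\xrightarrow{\;\C(1,e_{A,B})\;}\C(A,SB),
\]
and identify this segment, via the Yoneda recovery of $\omega_{A,B,C}$ from $e_{B,C}$ together with $I\ox A=A$, as $\omega_{I,A,B}$, which by the definition $\rho_{A,B}=\omega^{-1}_{I,A,B}$ is $\rho^{-1}_{A,B}$. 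Hence the whole composite equals $\rho^{-1}_{A,B}\circ\rho_{A\ox B,I}$.

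It then remains to see that $\rho^{-1}_{A,B}\circ\rho_{A\ox B,I}=\varpi^{-1}_{A,B}$. Specialising Corollary~\ref{cor2}(i) to $C=I$ and using $\omega_{A,B,I}=\varpi_{A,B}$ and $B\ox I=B$ yields $\rho_{A\ox B,I}\circ\varpi_{A,B}=\rho_{A,B}$, so that $\rho_{A\ox B,I}=\rho_{A,B}\circ\varpi^{-1}_{A,B}$ and therefore $\rho^{-1}_{A,B}\circ\rho_{A\ox B,I}=\varpi^{-1}_{A,B}$, as desired. The only delicate point I anticipate is the bookkeeping in the two segment-identifications: one must track the strict-unit simplifications $SI=D$, $B\ox I=B$, and $I\ox A=A$ so that the sources and targets of the named isomorphisms line up exactly with those in the displayed composite. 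Once this matching is in place the argument is purely formal. A more computational route—feeding a morphism $f\colon A\ox B\to D$ through the composite and using the zig-zag triangle $e_{SB,B}\circ(n_B\ox 1)=1_{SB}$ to verify directly that it inverts $\varpi_{A,B}$—is also available, but the decomposition above sidesteps that calculation.
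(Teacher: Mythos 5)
Your proof is correct and follows essentially the same route as the paper: part (ii) is the same reinterpretation of Proposition~\ref{prop3}, and for part (i) your use of Corollary~\ref{cor2}(i) at $C=I$ is just the inverse form of the paper's appeal to Proposition~\ref{prop1} at $C=I$, combined with the same identifications of the two segments via Proposition~\ref{prop3} (equivalently $\rho_{A\ox B,I}$) and the Yoneda formula for $\omega_{I,A,B}$.
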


\begin{proof}
\begin{enumerate}[{\upshape (i)}]
\item From Proposition~\ref{prop1}, we have
\begin{align*}
\varpi_{I,A \ox B} &= \omega_{I,A \ox B,I} \\
    & = \omega_{A,B,I} \circ \omega_{I,A,B} \\
    &= \varpi_{A,B} \circ \omega_{I,A,B}.
\end{align*}
So the result follows from Proposition~\ref{prop3} and the formula for
$\omega_{I,A,B}$ in terms of $e_{A,B}$.

\item This is a reinterpretation of Proposition~\ref{prop3}.
\end{enumerate}
\end{proof}

We also introduce the natural family of morphisms
\[
\pi_{A,B,C} =
\Big(\xygraph{{\C(A \ox B,C)}
    :[r(2.7)] {\C(SC,S(A \ox B))} ^-S
    :[r(3.3)] {\C(SC \ox A,SB)} ^-{\omega_{SC,A,B}}}\Big).
\]
Taking $C = A \ox B$ and evaluating at the identity, we obtain the natural
transformation
\[
e_{A,B}:S(A \ox B) \ox A \ra SB.
\]
By Yoneda, it follows that $\pi_{A,B,C}$ is the composite
\[
\xygraph{{\C(A \ox B,C)}
    :[r(3.8)] {\C(SC \ox A,S(A \ox B) \ox A)} ^-{S(-) \ox A}
    :[r(4)] {\C(SC \ox A,SB)} ^-{\C(1,e_{A,B})}}.
\]

\begin{proposition}
The natural transformation $\pi$ is invertible if and only if $S$ is fully
faithful.
\end{proposition}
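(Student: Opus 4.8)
The plan is to exploit the factorization that the very definition of $\pi_{A,B,C}$ supplies: it is the composite of the functorial action $S:\C(A \ox B,C) \ra \C(SC,S(A \ox B))$ followed by the natural isomorphism $\omega_{SC,A,B}$. Since $\omega_{SC,A,B}$ is invertible for all $A,B,C$ (it was built as a natural isomorphism), each component $\pi_{A,B,C}$ is invertible if and only if its first factor $S:\C(A \ox B,C) \ra \C(SC,S(A \ox B))$ is a bijection. This single observation reduces the whole statement to a question about when $S$ acts bijectively on these particular hom-sets, so the two implications become almost formal.

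For the ``if'' direction, I would assume $S$ is fully faithful. Then for every pair of objects the map $S$ induces on hom-sets is a bijection; taking the objects to be $C$ and $A \ox B$ shows the first factor above is a bijection, whence $\pi_{A,B,C}$ is a composite of a bijection with an isomorphism and is therefore invertible. For the ``only if'' direction I would specialize to $B = I$. Having arranged $I$ to be a strict unit we have $A \ox I = A$, and since $\omega_{A,B,I} = \varpi_{A,B}$ the map $\omega_{SC,A,I} = \varpi_{SC,A}$ is an isomorphism. Hence invertibility of $\pi_{A,I,C}$ forces $S:\C(A,C) \ra \C(SC,SA)$ to be a bijection, and as $A$ and $C$ range over all objects this is precisely the assertion that $S:\C^\op \ra \C$ is fully faithful.

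The step I expect to carry the real content is the ``only if'' direction, or rather the recognition underlying it: the seemingly richer family $\{\pi_{A,B,C}\}$ already detects full faithfulness through the degenerate case $B = I$, simply because every object is of the form $A \ox I$. Once this is noticed, there is no genuine obstacle — everything rests on $\omega$ being an isomorphism and on the normalizations $\omega_{A,B,I} = \varpi_{A,B}$ and $A \ox I = A$ established earlier in the section.
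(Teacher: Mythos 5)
Your proof is correct and follows essentially the same route as the paper's: both factor $\pi_{A,B,C}$ as $\omega_{SC,A,B}\circ S$, use invertibility of $\omega$ to reduce everything to bijectivity of the first factor, and recover full faithfulness of $S$ by specializing a tensor index to the unit. The only cosmetic difference is that the paper sets $A=I$ (where $\omega_{SC,I,B}$ is the identity), while you set $B=I$ (where $\omega_{SC,A,I}=\varpi_{SC,A}$ is merely an isomorphism); both work equally well.
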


\begin{proof}
If $S$ is fully faithful then each $\pi_{A,B,C}$ is invertible (from the
definition, using invertibility of $\omega_{SC,A,B}$). Conversely, if $\pi$
is invertible, we may take $A = I$ in the definition of $\pi_{A,B,C}$ to
obtain $S:\C(B,C) \ra \C(SC,SB)$ which is consequently invertible. So $S$ is
fully faithful.
\end{proof}

A \emph{right internal hom} for objects $A$ and $E$ of $\C$ is a
representing object $E^A$ (or $[A,E]_r$) for the functor $\C(A \ox
-,E):\C^\op \ra \Set$.

\begin{corollary}\label{cor1.6}
If $S$ is fully faithful then $(SB)^{SC} \cong \lraise{B}{C}$.
\end{corollary}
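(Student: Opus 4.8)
The plan is to realise both $(SB)^{SC}$ and $\lraise{B}{C}$ as representing objects for functors $\C^\op \ra \Set$, exhibit a natural isomorphism between those two functors, and conclude by the Yoneda lemma. First I would record the two defining representability statements. By the definition of the right internal hom, $(SB)^{SC}$ carries a natural isomorphism
\[
    \C(A,(SB)^{SC}) \cong \C(SC \ox A,SB),
\]
while by the definition of the left internal hom, $\lraise{B}{C}$ carries a natural isomorphism
\[
    \C(A,\lraise{B}{C}) \cong \C(A \ox B,C),
\]
both natural in $A \in \C$.

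The bridge between the right-hand sides is the family $\pi$ just introduced, namely $\pi_{A,B,C}:\C(A \ox B,C) \ra \C(SC \ox A,SB)$. Since $S$ is assumed fully faithful, the preceding proposition guarantees that each $\pi_{A,B,C}$ is invertible, and as $\pi$ is a natural family it is in particular natural in $A$; thus $\pi_{-,B,C}$ is a natural isomorphism of functors $\C(- \ox B,C) \cong \C(SC \ox -,SB)$. Splicing the three displayed isomorphisms together yields a natural isomorphism $\C(A,\lraise{B}{C}) \cong \C(A,(SB)^{SC})$ in $A$, whence Yoneda produces the asserted isomorphism $(SB)^{SC} \cong \lraise{B}{C}$ (concretely, by evaluating the composite at the two relevant identities). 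A pleasant by-product of this argument is that it shows the right internal hom $(SB)^{SC}$ exists exactly when the left internal hom $\lraise{B}{C}$ does, so the statement is really an identification of representing objects wherever either is defined.

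The one point demanding care — and the only genuine obstacle — is the naturality of $\pi_{A,B,C}$ in its first variable $A$, precisely the variable in which the two internal homs are representing. This is part of the assertion that $\pi$ is a natural family, but I would verify it directly by unwinding the composite: $\pi_{A,B,C}$ is $S:\C(A \ox B,C) \ra \C(SC,S(A \ox B))$ followed by $\omega_{SC,A,B}$, where $S$ depends on $A$ contravariantly through $- \ox B$ and $\omega_{SC,A,B}$ is natural in its first subscript because it was assembled from the $\varpi$'s, which are natural in their first argument. Checking that single naturality square is all that is needed; none of the morphisms $e$, $n$, or the triangles of the earlier results enter, so once naturality in $A$ is confirmed the corollary follows formally.
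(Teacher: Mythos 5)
Your proof is correct and is essentially the paper's own argument: the paper likewise observes that the invertibility of $\pi$ (supplied by the preceding proposition once $S$ is fully faithful) exhibits $\lraise{B}{C}$ as a representing object for $\C(SC \ox -,SB)$, which is precisely what $(SB)^{SC}$ represents, so the two agree by Yoneda. Your extra care over naturality of $\pi_{A,B,C}$ in $A$ is already packaged in the paper's phrase ``natural family of morphisms $\pi$'' (though note the relevant variable $A$ sits in the \emph{second} subscript of $\omega_{SC,A,B}$, so one needs naturality of $\varpi$ in both of its arguments, not just the first).
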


\begin{proof}
The representability of $\C(SC \ox -,SB)$ by $\lraise{B}{C}$ is guaranteed
by the invertibility of $\pi$.
\end{proof}

An object $E$ is called a {\itshape right raiser} when there exists
a choice of $E^{A}$ for all $A\in \C$.

\begin{proposition}\label{prop-leftright}
A left raiser $D$ is a right raiser if and only if the functor
\[
    S:\C^\op \ra \C
\]
has a left adjoint $S':\C^\op \ra \C$.
\end{proposition}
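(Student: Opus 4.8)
The plan is to reduce the whole statement to the defining isomorphism $\varpi$ together with the standard representability criterion for adjoint functors. By definition, $D$ is a right raiser precisely when, for every $A \in \C$, the functor $\C(A \ox -, D) : \C^\op \ra \Set$ is representable, the representing object being the right internal hom $D^A$. My first step would be to rewrite this functor using the left raiser structure already available. Since $\varpi_{A,C} : \C(A, SC) \riso \C(A \ox C, D)$ is natural in \emph{both} variables, for each fixed $A$ it gives a natural isomorphism of functors
\[
    \C(A \ox -, D) \;\cong\; \C(A, S-) : \C^\op \ra \Set;
\]
here $S$ is contravariant, so $C \mapsto \C(A, SC)$ is genuinely a functor on $\C^\op$. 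Consequently an object represents $\C(A \ox -, D)$ if and only if it represents $\C(A, S-)$.

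The second step would be to recognise representability of all the functors $\C(A, S-)$ as exactly the condition for $S$ to have a left adjoint. By the standard pointwise criterion, the functor $S : \C^\op \ra \C$ admits a left adjoint $S' : \C^\op \ra \C$ if and only if $\C(A, S-) : \C^\op \ra \Set$ is representable for every $A$; when it is, the representing objects assemble into $S'$, with $S'A$ the representing object and with adjunction isomorphism
\[
    \C(C, S'A) \;\cong\; \C(A, SC)
\]
natural in $A$ and $C$. Splicing the two steps together yields the desired equivalence, and identifies the left adjoint on objects by $S'A = D^A$. For the record I would note that the adjunction isomorphism is precisely the paste of the defining isomorphism $\C(C, D^A) \cong \C(A \ox C, D)$ for the right hom with $\varpi^{-1}_{A,C}$.

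The only genuine care needed is with variances. Because $S$ runs $\C^\op \ra \C$ and is contravariant in its argument, its left adjoint is again (the opposite of) a functor $\C^\op \ra \C$, and one must confirm that the isomorphism $\C(C, S'A) \cong \C(A, SC)$ has the handedness that names $S'$ a \emph{left} adjoint rather than a right one; this is settled by applying the representability criterion to $S$ regarded as a functor into $\C$ out of $\C^\op$. I expect this indexing to be the main (and essentially only) obstacle: beyond invoking $\varpi$ and the representability characterisation of adjoints there is no computation to perform.
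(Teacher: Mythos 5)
Your proposal is correct and matches the paper's own argument: both identify $\C(A\ox -,D)\cong\C(A,S-)$ via $\varpi$ and then invoke the pointwise representability criterion for a left adjoint, concluding $S'A\cong D^A$. The extra care you take over variances is sound but does not change the substance.
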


\begin{proof}
To say $S$ has a left adjoint means that, for each object $A$, there is an
object $S' A$ and an isomorphism $\C(A,SB) \cong \C(B,S' A)$, natural in
$B$. However, we have the natural isomorphism $\C(A,SB) \cong \C(A \ox
B,D)$, and therefore $S' A \cong D^A$.
\end{proof}

Notice that the existence of a family of ``commutativity'' isomorphisms
$c_{A,B}:A \ox B \cong B \ox A$ in $\C$, which only need to be natural in
one of the indices $A$ or $B$, implies that every left raiser $D$ is
automatically also a right raiser; moreover, $D^A \cong \lraise{A}{D}$. This is
the case when $\C$ is braided, or, \emph{a fortiori}, symmetric.

We call $D$ a \emph{raiser} when it is both a left and right raiser. In
this case, the unit and counit for the adjunction $S' \dashv S$ are natural
families of morphisms $\alpha_A:A \ra SS' A$ and $\beta_B:B \ra S'SB$ in
$\C$. (The apparent wrong direction of the counit $\beta$ is explained by
the contravariantness of $S$; Peter Freyd has called this situation ``a
contravariant adjunction on the right''.)

A monoidal category is \emph{left closed} when every object is a left
raiser. It is \emph{closed} when every object is a raiser.

Following Chapter 12 of~\cite{Sbook} we call the object $D$ \emph{left
dualizing} when it is a raiser and each $\alpha_A: A \ra SS'A$ is
invertible. By Proposition~\ref{prop-leftright}, this is equivalent to
requiring $D$ to be a left raiser for which $S$ has a fully faithful left
adjoint. We call $D$ \emph{dualizing} when it is a left raiser and $S$ is an
equivalence. Since an equivalence has a fully faithful left adjoint, it
follows that $D$ is also a right raiser.

A monoidal category is \emph{left $*$-autonomous} when it is equipped with
a left dualizing object. It is \emph{$*$-autonomous}~\cite{B} when it is
equipped with a dualizing object.

Each left $*$-autonomous category is left closed since
\[
    \lraise{B}{A} \cong \lraise{B}{(SS'A)} \cong S (B \ox S'A).
\]
Each $*$-autonomous category is closed since it is left $*$-autonomous
and so left closed, and (by looking at $\C$ with the reversed tensor
product) has right internal hom defined by
\[
    B^C \cong S'(SB \ox C).
\]

%=========================================================================%
\section{Monoidal comonads}\label{sec-mc}
%=========================================================================%

A monoidal comonad on a monoidal category $\C$ consists of a comonad
$G = (G,\delta,\epsilon)$ on $\C$ such that $G:\C \ra \C$
is a monoidal functor and $\delta:G \ra GG$ and $\epsilon:G \ra 1_\C$ are
monoidal natural transformations. So, apart from the comonad axioms, we have
a natural transformation
\[
    \phi_{A,B}:GA \ox GB \ra G(A \ox B)
\]
and a morphism $\phi_0:I \ra GI$ satisfying the following conditions
(where we continue to write as if $\C$ were strict monoidal).
\[
\vcenter{\xymatrix@R=10ex@C=8ex{
GA \ox GB \ox GC \ar[r]^-{1 \ox \phi_{B,C}} \ar[d]_{\phi_{A,B} \ox 1}
    & GA \ox G(B \ox C) \ar[d]^-{\phi_{A,B \ox C}} \\
    G(A \ox B) \ox GC \ar[r]^-{\phi_{A \ox B,C}} & G(A \ox B \ox C)}}
\qquad
\vcenter{\xymatrix@R=4.5ex@C=2ex{
    & GA \ar[dl]_-{\phi_0 \ox 1} \ar[dr]^-{1 \ox \phi_0} \ar[dd]^-1 \\ 
    GI \ox GA \ar[dr]_-{\phi_{I,A}} && GA \ox GI \ar[dl]^-{\phi_{A,I}} \\
    & GA}}
\]
\[
\vcenter{\xygraph{{GA \ox GB}
   (:[r(2.8)] {G(A \ox B)} ^{\phi_{A,B}}
    :[dr] {GG(A \ox B)}="e" ^<(0.4)\delta
    )
    :[dd] {GGA \ox GGB} _-{\delta \ox \delta}
    :[r(2.8)] {G(GA \ox GB)} ^-{\phi_{GA,GB}}
    :"e" _<(0.5){G \phi_{A,B}}}}
~
\vcenter{\xymatrix@R=7ex@C=0ex{
    GA \ox GB \ar[rr]^-{\phi_{A,B}} \ar[dr]_-{\epsilon \ox \epsilon}
    && G(A \ox B) \ar[dl]^-\epsilon \\ & A \ox B}}
\]
\[
\vcenter{\xymatrix@=7ex{
    I \ar[d]_-{\phi_0} \ar[r]^-{\phi_0} & GI \ar[d]^-\delta \\
    GI \ar[r]^-{G\phi_0} & G^2I}}
\qquad\qquad
\vcenter{\xymatrix@R=7ex{
    & GI \ar[dr]^-{\epsilon_I} \\ I \ar[ur]^-{\phi_0} \ar[rr]^-1 && I}}
\]

Let $\C^G$ denote the category of Eilenberg-Moore coalgebras for the comonad
$G$. Objects are pairs $(A,\gamma:A \ra GA)$, called $G$-coalgebras,
satisfying
\[
\vcenter{\xygraph{{A}="s"
    :[d(1.2)] {GA} _-\gamma
    :[r(1.5)] {G^2 A}="t" ^-{\delta_A}
 "s":[r(1.5)] {GA} ^-\gamma
    :"t" ^{G\gamma}}}
\qquad \text{and} \qquad
\vcenter{\xymatrix{
A \ar[dr]_-1 \ar[r]^-\gamma & GA \ar[d]^-{\epsilon_A} \\ & A.}}
\]
Morphisms $f:(A,\gamma) \ra (B,\gamma)$ in $\C^G$ are morphisms $f:A \ra B$
in $\C$ such that the square
\[
\xygraph{{A}="s"
    :[r(1.5)] {GA} ^-\gamma
    :[d(1.2)] {GB}="t" ^-{Gf}
 "s":[d(1.2)] {B} _-f
    :"t" ^-\gamma}
\]
commutes.

We make a note of the following fact:

\begin{proposition}\label{prop-gtalg}
If $G:\C \ra \C$ is a comonad with a left adjoint $T:\C \ra \C$, then $T$
becomes a monad and $\C^G \cong \C^T$. Furthermore, if $G$ is a monoidal
comonad then $T$ is an opmonoidal monad.
\end{proposition}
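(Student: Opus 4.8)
The plan is to transport every piece of structure across the adjunction $T \dashv G$ by the method of mates (conjugate natural transformations). Write $\eta\colon 1_\C \to GT$ and $\varepsilon\colon TG \to 1_\C$ for the unit and counit of this adjunction. To make $T$ a monad I would take its unit $1_\C \to T$ to be the mate $\epsilon T \circ \eta$ of the comonad counit $\epsilon\colon G \to 1_\C$, and its multiplication $TT \to T$ to be the mate of the comultiplication $\delta\colon G \to GG$. The conceptual reason this yields a monad is that the adjunction $T \dashv G$ exhibits $T$ and $G$ as a dual pair in the strict monoidal category $([\C,\C],\circ,1_\C)$ of endofunctors of $\C$; since passing to duals is a contravariant strong monoidal functor, it carries the comonoid $(G,\delta,\epsilon)$ to a monoid, which is exactly a monad structure on $T$. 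Equivalently, one verifies directly that forming mates reverses the order of horizontal composition, so that each comonad axiom for $G$ transposes to the corresponding monad axiom for $T$.

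For the isomorphism $\C^G \cong \C^T$ I would use the bijection $\C(A,GA) \cong \C(TA,A)$ furnished by the adjunction, under which a coaction $\gamma\colon A \to GA$ corresponds to $\bar\gamma = \varepsilon_A \circ T\gamma\colon TA \to A$. Using the triangle identities together with the naturality of $\eta$ and $\varepsilon$ and the definitions of the monad unit and multiplication as mates, the two $G$-coalgebra axioms for $\gamma$ transpose to precisely the two $T$-algebra axioms for $\bar\gamma$; and a morphism $f$ compatible with two coactions transposes to one compatible with the associated actions. This produces functors between $\C^G$ and $\C^T$ that are the identity on underlying objects and arrows and are mutually inverse, hence an isomorphism of categories.

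For the final assertion I would equip $T$ with the opmonoidal structure obtained as the mate of the monoidal structure on $G$: the counit-type map $TI \to I$ is the transpose of $\phi_0\colon I \to GI$, while $T(A \ox B) \to TA \ox TB$ is the transpose of the composite $A \ox B \xrightarrow{\eta \ox \eta} GTA \ox GTB \xrightarrow{\phi_{TA,TB}} G(TA \ox TB)$. That the associativity constraint and the two unit constraints for $(\phi,\phi_0)$ transpose to the opmonoidal coherence conditions for $T$ is an instance of the general principle that the mate correspondence across an adjunction is functorial and carries a (lax) monoidal structure on the right adjoint to an opmonoidal structure on the left adjoint.

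The main obstacle is entirely one of bookkeeping: confirming that under transposition each axiom lands on its intended counterpart — the two (co)algebra laws in the second step, and the associativity together with the two unit coherences in the third. Each such verification is a routine diagram chase driven by the triangle identities and naturality, and all of them follow formally from the functoriality of the mate construction, so no genuinely new idea is required beyond organising these transpositions cleanly.
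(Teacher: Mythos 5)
Your proposal is correct, and it is worth noting that the paper itself records this proposition without proof, simply as a known fact; the mate/doctrinal-adjunction argument you give is exactly the standard justification (the monad structure on $T$ as the dual of the comonoid $(G,\delta,\epsilon)$ in $([\C,\C],\circ,1_\C)$, the coalgebra--algebra transposition for $\C^G \cong \C^T$, and Kelly's doctrinal adjunction for the opmonoidal structure). One point to make explicit: the conclusion that $T$ is an \emph{opmonoidal monad} requires not only that $T$ acquire an opmonoidal functor structure, but also that the monad multiplication and unit be opmonoidal natural transformations. This does follow from the principle you invoke --- the mate of a monoidal natural transformation between (composites of) right adjoints is an opmonoidal natural transformation between the corresponding left adjoints, applied to $\delta$ and $\epsilon$ --- but it is a separate verification from the three coherence axioms for $(\phi,\phi_0)$ that you list, and should be stated alongside them.
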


It is well known~\cite{M} that, if $G$ is a monoidal comonad, then $\C^G$
becomes monoidal in such a way that the underlying functor $U:\C^G \ra \C$
becomes strict monoidal. The tensor product for $\C^G$ is defined by
\[
(A,\gamma) \ox (B,\gamma) = \Big(A \ox B,
\xymatrix@1@C=3em{{A \ox B} \ar[r]^-{\gamma \ox \gamma} & {GA \ox GB} 
    \ar[r]^-{\phi_{A,B}} & {G(A \ox B)}}\Big)
\]
and the unit object is $(I,\phi_0)$.

In the dual setting of opmonoidal monads, the paper of A.~Brugui\`eres and
A.~Virelizier~\cite{BV} provides the structure on the monad in order for
left (or right) autonomy of $\C$ to lift to the category of
Eilenberg-Moore algebras. Here we are interested in lifting $*$-autonomy from
$\C$ to $\C^G$. We begin with structure weaker than $*$-autonomy.

Assume we merely have a functor $S:\C^\op \ra \C$ that we would like to lift
to $\C^G$.
\[
\xymatrix@=3em{(\C^G)^\op \ar@{.>}[r]^-S \ar[d]_{U^\op} & \C^G \ar[d]^-U \\
\C^\op \ar[r]^-S & \C}
\]
By~\cite{S}, we require a $G$-coaction on $SU^\op$; that is, a natural
transformation
\[
\hat{\nu}:SU^\op \ra GSU^\op
\]
satisfying two conditions. Since $U$ has a right adjoint $R$ defined by $RA
= (GA,\delta_A)$, such $\hat{\nu}$ are in bijection with natural
transformations
\[
    \nu:S \ra GSG^\op
\]
(where we use the fact that $G =UR$) satisfying:

\begin{equation}\tag{\text{Axiom 1}}
\vcenter{\xymatrix@C=3ex@R=5ex{ & GSG \ar[dr]^-{\epsilon_{SG}} \\ 
    S \ar[ur]^-{\nu} \ar[rr]_{S \epsilon} && SG}}
\end{equation}

\begin{equation}\tag{\text{Axiom~2}}
\vcenter{\xygraph{{S} 
   (:[r(1.9)] {GSG} ^-{\nu} 
    :[r(1.1)d(0.9)] {GGSG}="e" ^-{\delta_{SG}}
   )
    :[d(1.8)] {GSG} _-{\nu} 
    :[r(1.9)] {GGSGG} ^-{G \nu_G}
    :"e" _-{GGS\delta}}}
\end{equation}

\begin{proposition}
If $S:\C^\op \ra \C$ is a functor and $\nu:S \ra GSG$ is a natural
transformation satisfying Axioms~1 and~2 then a functor $\bar{S}:
(\C^G)^\op \ra \C^G$ is defined by
\[
    \bar{S}(A,\gamma) = (SA,GS\gamma \o \nu_A), \qquad \bar{S}f = Sf.
\]
\end{proposition}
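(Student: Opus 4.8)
The claim is that the assignment $\bar S(A,\gamma) = (SA, GS\gamma \circ \nu_A)$ defines a functor $(\C^G)^\op \to \C^G$. There are really two things to check: first, that $\bar S(A,\gamma)$ is genuinely a $G$-coalgebra (the coassociativity and counit axioms for the pair $(SA, GS\gamma \circ \nu_A)$); and second, that $\bar S f = Sf$ is a morphism of coalgebras whenever $f$ is, after which functoriality ($\bar S(1) = 1$, $\bar S(gf)=\bar S f \,\bar S g$ with the variance reversal) is inherited from that of $S$ on the underlying morphisms. The plan is to reduce both verifications to manipulating the two given diagrams, Axiom~1 and Axiom~2, together with the naturality of $\nu$, $\delta$, and $\epsilon$.

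\textbf{The counit axiom.}
First I would verify $\epsilon_{SA}\circ(GS\gamma\circ\nu_A)=1_{SA}$. The composite is $\epsilon_{SA}\circ GS\gamma\circ\nu_A$. By naturality of $\epsilon$ applied to the morphism $S\gamma:SGA\to SA$, we have $\epsilon_{SA}\circ GS\gamma = S\gamma\circ\epsilon_{SGA}$. So the composite becomes $S\gamma\circ\epsilon_{SGA}\circ\nu_A$. Now $\epsilon_{SGA}\circ\nu_A$ is exactly the left-hand composite in Axiom~1 instantiated at $A$, which equals $S\epsilon_A$. Hence the whole thing is $S\gamma\circ S\epsilon_A=S(\epsilon_A\circ\gamma)$, and since $(A,\gamma)$ is a coalgebra, $\epsilon_A\circ\gamma=1_A$, giving $S(1_A)=1_{SA}$ as required.

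\textbf{The coassociativity axiom.}
This is the main obstacle. I must show $\delta_{SA}\circ(GS\gamma\circ\nu_A)=G(GS\gamma\circ\nu_A)\circ(GS\gamma\circ\nu_A)$, i.e. that the two ways of coacting twice agree. The strategy is to start from $\delta_{SA}\circ GS\gamma\circ\nu_A$ and transport $\delta$ past $GS\gamma$ using naturality of $\delta$ (which turns $\delta_{SA}\circ GS\gamma$ into $GGS\gamma\circ\delta_{SGA}$), then rewrite $\delta_{SGA}\circ\nu_A$ by Axiom~2. Axiom~2 expresses $\delta_{SG}\circ\nu$ as the alternative composite $GGS\delta\circ G\nu_G\circ\nu$; substituting this and then using the coalgebra identity $G\gamma\circ\gamma=\delta_A\circ\gamma$ together with naturality of $\nu$ in the morphism $\gamma:A\to GA$ should let me refold the result into $GGS\gamma\circ G\nu_A\circ\cdots$, which is precisely $G(GS\gamma\circ\nu_A)\circ\nu_A$ post-composed appropriately to match $G(GS\gamma\circ\nu_A)\circ GS\gamma\circ\nu_A$. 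The delicate point is keeping the $G$'s, the two $S\gamma$ factors, and the reindexing of $\nu$ (at $A$ versus at $GA$) correctly aligned; this is where the coalgebra axiom for $\gamma$ must be invoked to convert a $\delta_A$ produced by Axiom~2 back into the doubled coaction $G\gamma\circ\gamma$. Once coassociativity is in hand, checking that $Sf$ is a coalgebra morphism is a short diagram chase: naturality of $\nu$ along $f^\op$ combined with the coalgebra-morphism condition $Gf\circ\gamma=\gamma\circ f$ for $f$ gives the required compatibility of $Sf$ with the two induced coactions, and functoriality then follows formally from that of $S$.
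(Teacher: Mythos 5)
Your proof is correct, but it takes a more elementary route than the paper, which offers no explicit verification at all: there the proposition is obtained from the formal theory of monads (the reference [S72]), by observing that a lifting $\bar S$ of $SU^\op$ through $U:\C^G\ra\C$ is the same thing as a $G$-coaction $\hat\nu:SU^\op\ra GSU^\op$, and that under the adjunction $U\dashv R$ (with $RA=(GA,\delta_A)$, so $G=UR$) such coactions correspond bijectively to natural transformations $\nu:S\ra GSG$ satisfying Axioms~1 and~2, these axioms being precisely the mates of the two coaction conditions. You instead check the coalgebra axioms for $(SA,GS\gamma\o\nu_A)$ by hand, and every ingredient you name is the right one: the counit axiom follows from naturality of $\epsilon$ at $S\gamma$, Axiom~1, and $S(\epsilon_A\o\gamma)=1_{SA}$; coassociativity follows from naturality of $\delta$ at $S\gamma$, Axiom~2, the coalgebra identity $\delta_A\o\gamma=G\gamma\o\gamma$ (which, applied under $GGS$, converts $GGS\gamma\o GGS\delta_A$ into $GGS\gamma\o GGSG\gamma$), and then naturality of $\nu$ at $\gamma$ in the form $GSG\gamma\o\nu_{GA}=\nu_A\o S\gamma$, which refolds the composite into $G(GS\gamma\o\nu_A)\o GS\gamma\o\nu_A$; and the morphism part uses naturality of $\nu$ at $f$ together with $Gf\o\gamma_A=\gamma_B\o f$. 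You describe rather than fully execute the last two lines of the coassociativity chase, but the chase does close as you indicate. What your approach buys is self-containedness and an explicit record of where each axiom is used; what the paper's approach buys is that the bijection between $\hat\nu$ and $\nu$ comes for free, so Axioms~1 and~2 are not merely sufficient but are exactly characterized as the conditions for the lifting to exist.
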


Now suppose we also have a natural transformation
\[
\omega_{A,B,C}:\C(A,S(B \ox C)) \ra \C(A \ox B,SC).
\]
By Yoneda, such natural transformations are in bijection with natural
transformations
\[
e_{B,C}:S(B \ox C) \ox B \ra SC.
\]
The bijection is determined by $e_{B,C} =
\omega_{S(B \ox C),B,C}(1_{S(B \ox C)})$ and $\omega_{A,B,C}$ is the
composite
\[
\xygraph{{\C(A,S(B \ox C))}
    :[r(3.6)] {\C(A \ox B,S(B \ox C) \ox B)} ^-{- \ox B}
    :[r(3.9)] {\C(A \ox B,SC)} ^-{\C(1,e_{B,C})}}.
\]

As we shall see, the condition that $e$ is a $G$-coalgebra morphism is
encapsulated in the following axiom.
\begin{equation}\tag{\text{Axiom~3}}
\hfill \vcenter{\xygraph{{S(A \ox B) \ox GA}
   (:[d(1.2)l(0.4)] {GSG(A \ox B) \ox GGA} _-{\nu_{A \ox B} \ox \delta_A}
    :[d(1.2)r(0.4)] {G(SG(A \ox B) \ox GA)} _-{\phi_{SG(A \ox B),GA}}
    :[r(4.8)] {G(S(GA \ox GB) \ox GA)} _-{G(S\phi_{A,B} \ox 1)}
    :[u(1.2)r(0.4)] {GSGB}="e" _-{Ge_{GA,GB}}
    )
    :[r(2.8)] {S(A \ox B) \ox A} ^-{1 \ox \epsilon_A}
    :[r(2)] {SB} ^-{e_{A,B}}
    :"e" ^-{\nu_B}}}
\end{equation}

%\begin{equation}\tag{Axiom~3 alternative}
%\xymatrix{
%S(A \ox B) \ox GA \ar[d]_{S(\epsilon \ox 1) \ox 1} \ar[r]^-{\nu \ox 1}
    %& GSG(A \ox B) \ox GA \ar[r]^{GS\phi \ox 1} & GS(GA \ox GB) \ox GA
    %\ar[d]^-{1 \ox \delta} \\
%S(GA \ox B) \ox GA \ar[d]_-e && GS(GA \ox GB) \ox G^2 A \ar[d]^-\phi \\
%SB \ar[r]_-\nu & GSGB & G(S(GA \ox GB) \ox GA) \ar[l]^-{Ge}}
%\end{equation}

\begin{proposition}\label{prop-e-coalg}
Assuming Axioms~1,~2,~and~3, the morphism $e_{A,B}$ becomes a $G$-coalgebra
morphism
\[
    e_{A,B}:\bar{S}((A,\gamma) \ox (B,\gamma)) \ox (A,\gamma) \ra 
    \bar{S}(B,\gamma)
\]
for $G$-coalgebras $(A,\gamma)$ and $(B,\gamma)$. Conversely, if $e_{X,Y}$
is a $G$-coalgebra morphism when $X = (GA,\delta_A)$ and $Y = (GB,\delta_B)$
are cofree G-coalgebras, then Axiom 3 holds.
\end{proposition}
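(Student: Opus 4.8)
\emph{Proof plan.} The plan is first to make explicit the single square that $e_{A,B}$ must satisfy, and then to recognise Axiom~3 as its restriction to cofree coalgebras. Writing $\gamma_{\ox} = \phi_{A,B}\o(\gamma\ox\gamma)$ for the coaction on $(A,\gamma)\ox(B,\gamma)$, the preceding proposition (where Axioms~1 and~2 were used) gives $\bar{S}((A,\gamma)\ox(B,\gamma)) = (S(A\ox B),\, GS\gamma_{\ox}\o\nu_{A\ox B})$ and $\bar{S}(B,\gamma) = (SB,\, GS\gamma\o\nu_B)$; so, after tensoring the former with $(A,\gamma)$, the assertion that $e_{A,B}$ is a $G$-coalgebra morphism is precisely the commuting of
\[
GS\gamma\o\nu_B\o e_{A,B} = Ge_{A,B}\o\phi_{S(A\ox B),A}\o\big((GS\gamma_{\ox}\o\nu_{A\ox B})\ox\gamma\big)
\]
as morphisms $S(A\ox B)\ox A\to GSB$. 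Thus Axioms~1 and~2 enter only to guarantee that both sides of this equation live between genuine $G$-coalgebras; the real work is the equation itself, and that is governed by Axiom~3.

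For the forward implication I would exploit that $(A,\gamma)$ is a retract in $\C$ of the cofree coalgebra $(GA,\delta_A)$: the coaction $\gamma\colon(A,\gamma)\to(GA,\delta_A)$ is itself a coalgebra map, by the coassociativity axiom $\delta_A\o\gamma = G\gamma\o\gamma$, and it is split by $\epsilon_A$, since $\epsilon_A\o\gamma = 1_A$. Precomposing the two paths of Axiom~3 with $1\ox\gamma$ then collapses the factor $1\ox\epsilon_A$ on one side (because $\epsilon_A\o\gamma = 1_A$) and rewrites $\nu_{A\ox B}\ox\delta_A$ as $\nu_{A\ox B}\ox(G\gamma\o\gamma)$ on the other (by coassociativity). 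Postcomposing with $GS\gamma$ turns the left-hand side into the left-hand side of the displayed equation, while on the right I would use the Eilenberg--Kelly naturality of $e$ in its second variable to replace $S\gamma\o e_{GA,GB}$ by $e_{GA,B}\o(S(1\ox\gamma)\ox 1)$, and then the extraordinary naturality of $e$ in its first variable (with respect to $\gamma\colon A\to GA$) to replace $e_{GA,B}$ by $e_{A,B}$. What then survives is a comparison of two strings built from $\phi$, $S\phi_{A,B}$ and $\nu$, which I would settle by naturality of $\phi$ and of $\nu$ together with the monoidal-comonad coherence diagrams.

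For the converse I would specialise the displayed coalgebra-morphism equation to the cofree coalgebras $X=(GA,\delta_A)$ and $Y=(GB,\delta_B)$, so that $\gamma$ is replaced throughout by $\delta_A$ and $\delta_B$, obtaining an identity that lives entirely in $\C$. Running the previous reductions backwards --- composing with the counit $1\ox\epsilon_A$, using the comonad counit triangle together with Axiom~1, and invoking the monoidal-comonad compatibility $\epsilon_{A\ox B}\o\phi_{A,B} = \epsilon_A\ox\epsilon_B$ (which is exactly what produces the factor $S\phi_{A,B}$ of Axiom~3) --- strips away the redundant copies of $G$. A final application of the Eilenberg--Kelly naturality of $e$, converting the remaining $e_{GA,GB}$ back into $e_{A,B}$, should deliver Axiom~3 verbatim.

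The main obstacle is common to both directions and lies in the naturality bookkeeping for $e$. Because $e_{A,B}\colon S(A\ox B)\ox A\to SB$ carries its first variable contravariantly inside $S(A\ox B)$ but covariantly as the tensor factor, the naturality one must invoke there is the extraordinary (dinatural) naturality of Eilenberg--Kelly rather than ordinary naturality; coordinating this with the several instances of $\phi$ and $S\phi_{A,B}$ that have to be transported past one another is where the calculation is most error-prone and where I would proceed most carefully.
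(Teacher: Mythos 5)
Your plan is correct and follows the paper's own argument: for the forward direction you precompose Axiom~3 with $1\ox\gamma$ and use $\epsilon_A\circ\gamma=1_A$, coassociativity of $\gamma$, the ordinary (second-variable) and extraordinary (first-variable) naturality of $e$, and naturality of $\phi$ and $\nu$ to arrive at the coalgebra-morphism square, while for the converse you specialize to the cofree coalgebras and strip the redundant copies of $G$ with the comonad counit law and the same naturalities --- exactly the content of the paper's two large diagrams. The only (harmless) inaccuracies are in your converse bookkeeping: the reduction is effected by precomposing with $S(\epsilon_A\ox\epsilon_B)\ox 1$ rather than with $1\ox\epsilon_A$, the factor $S\phi_{A,B}$ is produced by naturality of $\phi$ together with $G\epsilon\circ\delta=1$ rather than by $\epsilon_{A\ox B}\circ\phi_{A,B}=\epsilon_A\ox\epsilon_B$, and Axiom~1 is not actually needed there.
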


\begin{proof}
The following diagram commutes.
\[
\scalebox{0.9}{
\xygraph{{S(A \ox B) \ox A}="s"
    :[r(2.5)] {S(A \ox B) \ox GA} ^-{1 \ox \gamma}
    (:@<3pt>[r(3)] {S(A \ox B) \ox GGA}="1" ^-{1 \ox \delta_A}
    )
     :@<-3pt>"1" _-{1 \ox G\gamma} 
    :[r(1.5)d(1.2)] {GSG(A \ox B) \ox GGA}="2" ^-{\nu_{A \ox B} \ox 1}
    :[d(1.4)] {G(SG(A \ox B) \ox GA)}="6" ^-\phi
    :[d(1.5)] {G(S(GA \ox GB) \ox GA)}="7" ^-{G(S\phi \ox 1)}
    :[r(0.7)d(1)] {GSGB}="8" ^<(0.6){Ge_{GA,GB}}
    :[d(1.7)l] {GSB}="e" ^-{GS\gamma}
 "s":[d(1.2)l] {GSG(A \ox B) \ox A}="3" _-{\nu_{A \ox B} \ox 1}
    :[d(1.4)] {GS(GA \ox GB) \ox A} _-{GS\phi \ox 1}
    :[d(1.4)] {GS(A \ox B) \ox A} _-{GS(\gamma \ox \gamma) \ox 1}
    :[d(1.4)] {GS(A \ox B) \ox GA}="5" _-{1 \ox \gamma}
    :[d(1.4)r] {G(S(A \ox B) \ox A)}="9" _-\phi
    :"e" ^-{Ge_{A,B}}
 "3":[r(4)] {GSG(A \ox B) \ox GA}="4" ^-{1 \ox \gamma}
    :"2" ^-{1 \ox G\gamma}
 "4":[l(1.5)d(2)] {GS(GA \ox GB) \ox GA} _-{GS\phi \ox 1}
    (:"5" |<(0.55){GS(\gamma \ox \gamma) \ox 1}
    ):[r(1)d(0.9)] {G(S(GA \ox GB) \ox A)}="10" ^-\phi
    :"9" _<(0.35){G(S(\gamma \ox \gamma) \ox 1)}
 "4":[r(0.6)d(1.4)] {G(SG(A \ox B) \ox A)} ^-\phi
   (:"6" ^-{G(1 \ox \gamma)}
   ):"10" ^-{G(S\phi \ox 1)}
    :"7" _-{G(1 \ox \gamma)}
    :[d(1.3)l] {G(S(GA \ox B) \ox GA)}="11" _-{G(S(1\ox \gamma) \ox 1)} 
    :"e" _-{Ge_{GA,B}}
"10":[r(0.3)d(1.3)] {G(S(GA \ox B) \ox A)}="12" ^-{G(S(1 \ox \gamma) \ox 1)}
   (:"9" ^<(0.3){\quad G(S(\gamma \ox 1) \ox 1)}
   ):"11" ^-{G(1 \ox \gamma)}}
}
\]
By Axiom~3, the top route around this diagram is
\[
\scalebox{1}{
\xygraph{{S(A \ox B) \ox A}="s"
    :[r(1.3)u] {S(A \ox B) \ox GA} ^-{1 \ox \gamma}
    :[r(1.3)d] {S(A \ox B) \ox A}="t" ^-{1 \ox \epsilon_A}
 "s":"t" ^-1
    :[r(2.2)] {SB} ^-{e_{A,B}}
    :[r(1.6)] {GSGB} ^-{\nu_B}
    :[r(1.6)] {GSB} ^-{GS\gamma}}
}
\]
and, therefore, the following diagram commutes
\[
\xygraph{{S(A \ox B) \ox A}="s"
    :[d(1.2)] {GSG(A \ox B) \ox A} _-{\nu_{A \ox B} \ox 1}
    :[d(1.2)]{GS(GA \ox GB) \ox A} _-{GS \phi \ox 1}
    :[r(3.6)] {GS(A \ox B) \ox A} ^-{GS(\gamma \ox \gamma) \ox 1}
    :[r(3)] {GS(A \ox B) \ox GA} ^-{1 \ox \gamma}
    :[u(1.2)] {G(S(A \ox B) \ox A)} _-\phi
    :[u(1.2)] {GSB}="e" _-{Ge_{A,B}}
 "s":[r(2.5)] {SB} ^-{e_{A,B}}
    :[r(2.5)] {GSGB} ^-{\nu_B}
    :"e" ^-{GS \gamma}
}
\]
which is precisely the condition for $e_{A,B}$ to be a $G$-coalgebra morphism.

To prove the converse statement we observe that the diagram
\[
\xygraph{{S(A \ox B) \ox GA}="1"
    [r(3.5)u(1.2)] {S(A \ox B) \ox A}="2"
    [r(3)d(1.2)] {SB}="3"
    [r(1)d(1.8)] {GSGB}="4"
 "2"[d(1.2)] {S(GA \ox B) \ox GA}="9"
 "1"[d(1.6)] {GSG(A \ox B) \ox GGA}="8"
    [d(3.2)] {G(SG(A \ox B) \ox GA)}="7"
 "9"[d(1.2)] {S(GA \ox GB) \ox GB}="10"
    [d(1.2)] {GSG(GA \ox GB) \ox GGB}="11"
    [d(1.2)] {G(SG(GA \ox GB) \ox GB)}="12"
    [d(1.2)] {G(S(GGA \ox GGB) \ox GB)}="13"
    [d(1.2)] {G(S(GA \ox GB) \ox GA)}="6"
 "3"[d(1.2)] {SGB}="14"
    [d(2.4)] {GSGGB}="15"
    [d(2.4)] {GSGB}="5"
    "1":"2"     ^-{1 \ox \epsilon}
    "1":"9"     ^-{S(\epsilon \ox 1) \ox 1}
    "1":"8"     _-{\nu \ox \delta}
    "9":"10"    ^-{S(1 \ox \epsilon) \ox 1}
    "9":"3"     ^-e
    "10":"14"   ^-e
    "10":"11"   ^-{\nu \ox \delta}
    "8":"11"    ^-{GSG(\epsilon \ox \epsilon) \ox 1}
    "8":"7"     _-\phi
    "11":"12"   ^-\phi
    "7":"12"    ^-{G(SG(\epsilon \ox \epsilon) \ox 1)\quad}
    "7":@/_3ex/"6" _-{G(S\phi \ox 1)}
    "12":"13"   ^-{G(S\phi \ox 1)}
    "13":@<1ex>"6" ^-{G(S(\delta \ox \delta) \ox 1)}
    "6":@<1ex>"13" ^-{G(S(G\epsilon \ox G\epsilon) \ox 1)}
    "6":@(dl,dr)"6" _-1
    "6":"5"     _-{Ge}
    "2":"3"     ^-e
    "3":"4"     ^-\nu
    "3":"14"    _-{S\epsilon}
    "14":"15"   _-\nu
    "15":"5"    _-{GS\delta}
    "4":"15"    |-{GSG\epsilon}
    "4":@/^3ex/"5" ^-1
    "10"[r(1.8)d(1.8)] {\scriptstyle (\dagger)}}
\]
commutes. The outside of the diagram is Axiom 3 and the region labelled by
$(\dagger)$ exactly expresses that $e_{GA,GB}$ is a $G$-coalgebra morphism.
\end{proof}

\begin{corollary}
If $A$, $B$, and $C$ are $G$-coalgebras then there is a natural
transformation $\omega_{A,B,C}$ such that the following square commutes.
\[
\xymatrix@R=7ex@C=12ex{
\C^G(A,\bar{S}(B \ox C)) \ar@{^{(}->}[d] \ar[r]^-{\omega_{A,B,C}}
    & \C^G(A \ox B,\bar{S}C) \ar@{^{(}->}[d] \\
    \C(UA,S(UB \ox UC)) \ar[r]^-{\omega_{UA,UB,UC}} & \C(UA \ox UB,SUC)}
\]
\end{corollary}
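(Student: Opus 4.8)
The plan is to show that the bottom map $\omega_{UA,UB,UC}$ carries the image of the left-hand inclusion into the image of the right-hand one, so that it restricts to the required $\omega_{A,B,C}$ along the top. The vertical maps are the injections supplied by the faithful functor $U$; they are well defined because $U$ is strict monoidal and $\bar{S}$ lifts $S$, so that $U\bar{S}(B \ox C) = S(UB \ox UC)$ and $U\bar{S}C = SUC$, matching the two bottom corners. Recall that, by the Yoneda description of $\omega$ recorded earlier, for $f \colon A \to \bar{S}(B \ox C)$ the element $\omega_{UA,UB,UC}(Uf)$ is precisely the composite $e_{UB,UC} \circ (Uf \ox 1_{UB})$. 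Thus the whole statement reduces to checking that this composite is the underlying morphism of a $G$-coalgebra morphism whenever $f$ is one.

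First I would observe that, since $\C^G$ is monoidal with $U$ strict monoidal, the tensor $f \ox 1_B \colon A \ox B \to \bar{S}(B \ox C) \ox B$ of the $G$-coalgebra morphisms $f$ and $1_B$ is again a $G$-coalgebra morphism, with underlying morphism $Uf \ox 1_{UB}$. Next, by Proposition~\ref{prop-e-coalg} the evaluation is a $G$-coalgebra morphism $e_{B,C} \colon \bar{S}(B \ox C) \ox B \to \bar{S}C$ whose underlying morphism in $\C$ is $e_{UB,UC}$. Composing the two, $e_{B,C} \circ (f \ox 1_B)$ is a $G$-coalgebra morphism $A \ox B \to \bar{S}C$ whose underlying morphism in $\C$ is $e_{UB,UC} \circ (Uf \ox 1_{UB}) = \omega_{UA,UB,UC}(Uf)$. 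Defining $\omega_{A,B,C}(f)$ to be this composite then yields a well-defined map making the square commute by construction.

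It remains to verify naturality of $\omega_{A,B,C}$ in $A$, $B$, and $C$, and this is automatic. The two inclusions are natural in all three variables, the bottom transformation is already natural, and each inclusion is a monomorphism; hence every naturality square for the top transformation maps under $U$ to a commuting naturality square for the bottom one, and injectivity of the inclusions forces the upstairs square to commute as well. I do not expect a genuine obstacle here: the substantive content—that $e$ is a coalgebra morphism, which rests on Axiom~3—has already been discharged in Proposition~\ref{prop-e-coalg}. The only points demanding care will be the verification that $f \ox 1_B$ is a coalgebra morphism (monoidality of $\C^G$) and the identification of the underlying morphism of the composite with $\omega_{UA,UB,UC}(Uf)$, both of which are routine.
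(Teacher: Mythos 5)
Your proposal is correct and matches the argument the paper intends: the corollary is stated without proof as an immediate consequence of Proposition~2.3, precisely because $\omega_{A,B,C}(f)=e_{B,C}\circ(f\ox 1_B)$ by the Yoneda description, and $e_{B,C}$ being a $G$-coalgebra morphism (together with strict monoidality of $U$) forces the restriction. Your additional remarks on well-definedness of the vertical inclusions and on naturality via injectivity are routine but accurate.
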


The condition that a morphism
\[
    n_I:I \ra SSI
\]
should be a $G$-coalgebra morphism is given by
\begin{equation}\tag{\text{Axiom~4}}
\vcenter{\xygraph{{I}
   (:[d(1.3)] {SSI} _-{n_I}
    :[r(1.6)] {GSGSI} ^-{\nu_{SI}}
    :[r(2.4)] {GSGSGI.} ^-{GSGS\phi_0}
    :[u(1.3)] {GSSI}="e" _-{GS\nu_I}
    )
    :[r(2)] {GI} ^-{\phi_0}
    :"e" ^-{Gn_I}}}
\end{equation}

\begin{theorem}
Suppose $D$ is a left raiser in the monoidal category $\C$ and define $S$,
$e$, and $n$ as in Section~\ref{sec-raisers}. Suppose $(G,\delta,\epsilon)$
is a monoidal comonad on $\C$. If $\nu:S \ra GSG$ is a natural
transformation satisfying Axioms~1, 2, 3,~and~4 then $(D,GS\phi_0 \o \nu_I)$ is
a left raiser in $\C^G$.
\end{theorem}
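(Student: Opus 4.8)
The plan is to show that the lifting $\bar{S}\colon(\C^G)^\op\ra\C^G$ of $S$, defined from $\nu$ via Axioms~1 and~2, supplies the internal homs in $\C^G$; that is, that $\lraise{(B,\gamma)}{D} = \bar{S}(B,\gamma)$. First observe that the object named in the statement is nothing but $\bar{S}$ applied to the unit coalgebra: since $SI = D$ we have
\[
(D, GS\phi_0 \o \nu_I) = \bar{S}(I,\phi_0).
\]
By the definition of left raiser it suffices to produce, for each $G$-coalgebra $(B,\gamma)$, an object of $\C^G$ representing $\C^G(-\ox(B,\gamma),\, \bar{S}(I,\phi_0))$, and I claim $\bar{S}(B,\gamma) = (SB,\, GS\gamma\o\nu_B)$ is such an object. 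Concretely I must exhibit an isomorphism, natural in $A$,
\[
\C^G(A,\bar{S}B) \riso \C^G\big(A\ox B,\, \bar{S}(I,\phi_0)\big).
\]

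For the forward map I invoke the Corollary to Proposition~\ref{prop-e-coalg}: taking $C = (I,\phi_0)$ there yields a natural family $\varpi_{A,B} = \omega_{A,B,I}$ on $\C^G$ sitting in a commutative square over the $\C$-level map $\varpi_{UA,UB}$, the two vertical legs being the inclusions $\C^G(X,Y)\hookrightarrow\C(UX,UY)$. These inclusions are injective since $U$ is faithful, and the bottom map $\varpi_{UA,UB}$ is an isomorphism because $D$ is a left raiser in $\C$; hence the lifted $\varpi_{A,B}$ is automatically injective and natural. The entire content of the proof is therefore to establish surjectivity.

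For surjectivity I construct the inverse explicitly by transporting the formula of part~(i) of the Corollary to Proposition~\ref{prop3} into $\C^G$. That formula reads
\[
\varpi_{A,B}^{-1}(f) = e_{A,B}\o\big((\bar{S}f\o n_I)\ox 1_A\big), \qquad f\colon A\ox B\ra D.
\]
The key observation is that, under Axioms~1--4, every ingredient is already a $G$-coalgebra morphism: $\bar{S}f$ is one by functoriality of $\bar{S}$ (Axioms~1 and~2); $n_I$ is a coalgebra morphism $(I,\phi_0)\ra\bar{S}\bar{S}(I,\phi_0) = \bar{S}(D, GS\phi_0\o\nu_I)$ precisely by Axiom~4; tensoring on the right by the coalgebra $(A,\gamma)$ preserves coalgebra morphisms; and $e_{A,B}\colon\bar{S}(A\ox B)\ox A\ra\bar{S}B$ is a coalgebra morphism by Proposition~\ref{prop-e-coalg}, which is where Axiom~3 enters. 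Hence for a coalgebra morphism $f$ the composite above is a coalgebra morphism $A\ra\bar{S}B$ whose underlying morphism in $\C$ is, by construction, $\varpi_{UA,UB}^{-1}(Uf)$. This produces a preimage of $f$, so $\varpi_{A,B}$ is surjective; with injectivity and naturality it is thus a natural isomorphism, and $(D, GS\phi_0\o\nu_I)$ is a left raiser.

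The main obstacle is the bookkeeping of coactions needed to certify that the inverse formula genuinely lands in $\C^G$: one must verify that the coaction carried by $\bar{S}\bar{S}(I,\phi_0)$ is exactly the target appearing in Axiom~4, so that $n_I$ really is a $G$-coalgebra morphism into $\bar{S}(D, GS\phi_0\o\nu_I)$, and that the evaluation $e_{A,B}$ used in the formula is the lifted one of Proposition~\ref{prop-e-coalg} rather than a differently-indexed copy. Once these type-checks are settled, faithfulness of $U$ does the remaining work for free — it lets us conclude that the two $\C^G$-morphisms are mutually inverse purely because their underlying $\C$-morphisms are, so that no diagram chase beyond Proposition~\ref{prop-e-coalg} and Axiom~4 is required.
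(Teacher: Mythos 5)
Your proof is correct, and it rests on exactly the same two pillars as the paper's: Proposition~\ref{prop-e-coalg} (Axioms~1--3) to make $e_{A,B}$ a $G$-coalgebra morphism, and Axiom~4 to make $n_I$ one, followed by transporting an explicit inverse formula into $\C^G$ and letting faithfulness of $U$ finish the job. The organization differs slightly. The paper lifts $\rho_{A,B}=\omega^{-1}_{I,A,B}$ using the formula $\rho_{A,B}(g)=S(g\ox B)\o n_B$, which requires knowing that $n_B$ for a general coalgebra $B$ is a coalgebra morphism (hence the appeal to Corollary~\ref{cor2}(iii) to reduce $n_B$ to $S(e_B)\o n_I$), and then invokes Proposition~\ref{prop1} to bootstrap from invertibility of $\omega_{I,A,B}$ to invertibility of all $\omega_{A,B,C}$ before specializing to $C=(I,\phi_0)$. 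You instead invert $\varpi_{A,B}=\omega_{A,B,I}$ directly via the formula of Corollary~1.5(i), which only needs $n_I$ itself to be a coalgebra morphism and bypasses both Corollary~\ref{cor2}(iii) and Proposition~\ref{prop1}. This is marginally more economical for the statement as given; what it does not deliver is the invertibility of the full family $\omega_{A,B,C}$ in $\C^G$, which the paper's argument yields along the way and then uses in the remark following the theorem (the compatibility square for $\pi$). Your attention to the type-checking of the coaction on $\bar{S}\bar{S}(I,\phi_0)$ against the target of Axiom~4 is exactly the point that needs care, and your resolution of it is right.
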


\begin{proof}
By Proposition~\ref{prop-e-coalg} and the fact that $e_B = e_{B,I}$, we have
that $e_B:\bar{S}(B,\gamma) \ox (B,\gamma) \ra \bar{S}(I,\phi_0)$ is a
$G$-coalgebra morphism. By Axiom~4 and Corollary~\ref{cor2}(iii), we have
that $n_A:(I,\phi_0) \ra \bar{S}(\bar{S}(A,\gamma) \ox (A,\gamma))$ is a
$G$-coalgebra morphism. From the formula for $\rho_{A,B}$ in terms of $n$,
we see that $\rho_{A,B}$ restricts as follows:
\[
\vcenter{\xygraph{{\C^G(A,\bar{S}B)}="s"
    :[r(3.3)] {\C^G(I,\bar{S}(A \ox B))} ^-{\rho_{A,B}}
    :@{^{(}->}[d(1.2)] {\C(I,S(UA \ox UB)).}="t"
 "s":@{^{(}->}[d(1.2)] {\C(UA,SUB)} 
     :"t" ^-{\rho_{UA,UB}}}}
\]
Since $\rho_{A,B} = \omega^{-1}_{I,A,B}$, it follows from
Corollary~\ref{cor2}(i) that $\omega_{I,A,B}:\C^G(I,\bar{S}(A \ox B)) \ra
\C^G(A,\bar{S}B)$ is invertible. By Proposition~\ref{prop1}, it follows that
\[
    \omega_{A,B,C}:\C^G(A,\bar{S}(B \ox C)) \ra \C^G(A \ox B,\bar{S}C)
\]
is invertible. Taking $C = (I,\phi_0)$, we have that $\bar{S}(I,\phi_0) =
(D,GS\phi_0 \o \nu_I)$ is a left raiser in $\C^G$, as required.
\end{proof}

In other words, we have
\[
\xymatrix@R=7ex@C=10ex{
\C^G(A \ox B,C) \ar@{^{(}->}[d] \ar[r]^-{\pi_{A,B,C}}
    & \C^G(\bar{S}C \ox A,\bar{S}B) \ar@{^{(}->}[d] \\
    \C(UA \ox UB,UC) \ar[r]^-{\pi_{UA,UB,UC}} & \C(SUC \ox UA,SUB)}
\]
for $A,B,C \in \C^G$. It follows that if the $\pi$ for $\C$ is injective,
then so is the $\pi$ for $\C^G$.

%=========================================================================%
\section{Star-autonomous monoidal comonads}\label{sec-starmc}
%=========================================================================%

Suppose $G$ is a monoidal comonad on the monoidal category $\C$. Suppose $S:
\C^\op \ra \C$ has a left adjoint $S':\C \ra \C^\op$ with unit $\alpha:1
\ra SS'$ and counit $\beta:1 \ra S'S$. Suppose we have $\nu:S \ra GSG$ and
$\nu':S' \ra GS'G$ each satisfying Axioms~1 and~2 so that we obtain liftings
\[
\bar{S}: (\C^G)^\op \ra \C^G \quad \mathrm{and} \quad
\bar{S}':\C^G \ra (\C^G)^\op.
\]

Consider the following conditions:

\begin{equation}\tag{\text{Axiom 5}}
\vcenter{\xygraph{{G}="0"
    :[r(2)] {GSS'}="1" ^-{G\alpha}
 "0":[d(1.4)] {SS'G} _-{\alpha_G}
    :[r(2)] {GSGS'G} ^-{\nu_{S'G}}
    :"1" _-{GS \nu'}}}
\end{equation}

\begin{equation}\tag{\text{Axiom 6}}
\vcenter{\xygraph{{G}="0"
    :[r(2)] {GS'S}="1" ^-{G\beta}
 "0":[d(1.4)] {S'SG} _-{\beta_G}
    :[r(2)] {GS'GSG} ^-{\nu'_{S'G}}
    :"1" _-{GS' \nu}}}
\end{equation}

\begin{proposition}
The unit $\alpha_A:(A,\gamma) \ra \bar{S}\bar{S}'(A,\gamma)$ is a
$G$-coalgebra morphism for each $G$-coalgebra $\gamma:A \ra GA$ if and only
if Axiom~5 holds. The counit $\beta_A:(A,\gamma) \ra
\bar{S}'\bar{S}(A,\gamma)$ is a $G$-coalgebra morphism for each
$G$-coalgebra $\gamma:A \ra GA$ if and only if Axiom~6 holds. Consequently,
if Axioms~5 and~6 hold, then $\bar{S}'$ is a left adjoint for $\bar{S}$.
\end{proposition}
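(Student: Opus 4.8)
The plan is to reduce the whole statement to the two pointwise claims about $\alpha$ and $\beta$, since the final assertion that $\bar{S}'\dashv\bar{S}$ is then formal. I will carry out the claim for $\alpha$ in detail; the claim for $\beta$ is obtained verbatim after interchanging the roles of $(S,\nu,\alpha)$ and $(S',\nu',\beta)$, so I would simply invoke that symmetry. First I would make the relevant coalgebra structures explicit. Writing $\gamma'=GS'\gamma\o\nu'_A$ for the structure map of $\bar{S}'(A,\gamma)=(S'A,\gamma')$, the lifting formula applied to $(S'A,\gamma')$ gives the structure map $\theta:=GS\gamma'\o\nu_{S'A}$ on $\bar{S}\bar{S}'(A,\gamma)=(SS'A,\theta)$. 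The condition that $\alpha_A$ be a $G$-coalgebra morphism $(A,\gamma)\ra\bar{S}\bar{S}'(A,\gamma)$ is then the single square $\theta\o\alpha_A=G\alpha_A\o\gamma$, and Part~1 asserts that this square commutes for every coalgebra $\gamma$ exactly when Axiom~5 holds.

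For sufficiency (Axiom~5 $\Rightarrow$ the square) I would build a pasting diagram whose outer boundary is the square above and whose interior regions are instances of: naturality of $\alpha$, $\nu$, and $\nu'$; functoriality of $G$, $S$, and $S'$; the coalgebra laws $\epsilon_A\o\gamma=1$ and $G\gamma\o\gamma=\delta_A\o\gamma$; and a single use of the $A$-component of Axiom~5, namely $GS\nu'_A\o\nu_{S'GA}\o\alpha_{GA}=G\alpha_A$. The mechanism is exactly that of Proposition~\ref{prop-e-coalg}: the coalgebra laws convert the structure map $\gamma$ (and the doubled copies produced by $\delta$) into the form in which the raw axiom applies. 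For necessity I would specialise the square to the cofree coalgebra $(GA,\delta_A)$ and then collapse it back to Axiom~5 using the counit law $\epsilon_{GA}\o\delta_A=1$ together with Axiom~2 for $\nu$ and $\nu'$; this is the converse half of Proposition~\ref{prop-e-coalg} transcribed to the present functors. Alternatively, both directions may be read off at once from the general lifting criterion of~\cite{S}: lifting the natural transformation $\alpha\colon 1\ra SS'$ to a transformation $1\ra\bar{S}\bar{S}'$ amounts to compatibility of $\alpha$ with the trivial coaction on the identity functor and with the composite coaction $\theta$ on $SS'$, and that compatibility is precisely Axiom~5.

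Once Parts~1 and~2 are in hand, Part~3 is formal. Assuming Axioms~5 and~6, the families $\alpha_A$ and $\beta_A$ are $G$-coalgebra morphisms for every coalgebra, and they are natural in $\C^G$ because $U\colon\C^G\ra\C$ is faithful and the corresponding naturality squares already commute in $\C$; hence they assemble into transformations $\bar{\alpha}\colon 1\ra\bar{S}\bar{S}'$ and $\bar{\beta}\colon 1\ra\bar{S}'\bar{S}$ in $\C^G$. It remains to verify the two triangle identities for the (contravariant) adjunction $\bar{S}'\dashv\bar{S}$. Each is an equation between morphisms of $\C^G$; applying the faithful functor $U$, and using that the liftings cover $S$ and $S'$ with $\alpha$ and $\beta$ as their underlying transformations, turns it into the corresponding triangle identity for $S'\dashv S$ downstairs, which holds by hypothesis. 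Faithfulness of $U$ then forces the upstairs identities, so $\bar{\alpha}$ and $\bar{\beta}$ exhibit $\bar{S}'$ as a left adjoint to $\bar{S}$.

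I expect the main obstacle to be the necessity direction of Part~1 (and symmetrically of Part~2): passing from the cofree-coalgebra instance of the square back to the clean, $\gamma$-free form of Axiom~5 requires the same delicate interplay of counit and coassociativity regions as in Proposition~\ref{prop-e-coalg}, and keeping every naturality square correctly oriented under the contravariance of $S$ and $S'$ is where the care is needed. Sufficiency is a longer but essentially routine paste, and Part~3 carries no real difficulty beyond faithfulness of $U$.
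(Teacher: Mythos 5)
Your proposal follows the paper's proof essentially verbatim: sufficiency by pasting naturality/functoriality regions around a single instance of Axiom~5, necessity by specialising to the cofree coalgebra $(GB,\delta_B)$ and collapsing via $G\epsilon_B\o\delta_B=1$, and the adjunction statement by faithfulness of $U$. The only slight over-provisioning is that the paper's two diagrams need neither the coalgebra laws nor Axiom~2 --- naturality, functoriality and the comonad counit law suffice --- but this does not affect correctness.
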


\begin{proof}
The outside of the following diagram expresses that $\alpha_A:(A,\gamma) \ra
\bar{S}\bar{S}'(A,\gamma)$ is a $G$-coalgebra morphism.
\[
\xygraph{{A}="11"
    :[r(3)] {GA}="12" ^-\gamma
    :[d(1.2)r] {GSS'A}="23" ^-{G\alpha_A}
"11":[d(1.2)] {SS'A}="21" _-{\alpha_A}
    :[d(1.2)] {GSGS'A}="31" _-{\nu_{S'A}}
    :[r(3)] {GSGS'GA}="32" _-{GSGS' \gamma}
    :"23" _-{GS \nu'_A}
"21":[r(2)] {SS'GA}="22" ^-{SS' \gamma}
    :"32" _-{\nu_{S'GA}}
"12":"22" _-{\alpha_{GA}}
"22"[r] {\scriptstyle (\dagger)}}
\]
The two unlabelled regions commute by naturality and the region labelled by
$(\dagger)$ is simply Axiom~5. Conversely, for all objects $B$ of $\C$, taking
$A = GB$ and $\gamma = \delta_B$ in the $G$-coalgebra condition, we obtain
the commutativity of region labelled by $(\ddagger)$ in the following diagram.
\[
\xygraph{{GB}="11"
    :[r(3)] {GGB}="12" ^-{\delta_B}
    :[r(3)] {GB}="13" ^-{G \epsilon_B}
    :[d(1.2)] {GSS'B}="23" ^-{G \alpha_B}
"11":[d(1.2)] {SS'GB}="21" _-{\alpha_{GB}}
    :[d(1.2)] {GSGS'GB}="31" _-{\nu_{S'GB}}
    :[r(3)] {GSGS'GGB}="32" _-{GSGS'\delta_B}
    :[r(3)] {GSGS'GB}="33" _-{GSGS'G\epsilon_B}
    :"23" _-{GS\nu'_B}
"12":[d(1.2)] {GSS'GB}="22" ^-{G\alpha_{GB}}
    :"23" ^-{GSS' \epsilon_B}
"32":"22" _-{GS\nu'_{GB}}
"21"[r(1.4)] {\scriptstyle (\ddagger)}}
\]
The unlabelled squares in the diagram commute by functoriality and
naturality, so the outside of the square commutes and is seen to
be Axiom~5.

The second sentence is dealt with symmetrically.
\end{proof}

\begin{definition}
A monoidal comonad $G$ on a left $*$-autonomous monoidal category $\C$ is
\emph{left $*$-autonomous} when it is equipped with natural transformations
$\nu:S \ra GSG$ and $\nu':S' \ra GS'G$, each satisfying Axioms 1 and 2, with
$\nu$ satisfying Axioms~3 and~4, and together satisfying Axioms~5 and~6. If
$\C$ is $*$-autonomous, we also say $G$ is $*$-autonomous when it is left
$*$-autonomous.
\end{definition}

Although we obtain the following corollary as an immediate consequence
of our results, it is the desired object of this paper.

\begin{corollary}
If $\C$ is a left $*$-autonomous monoidal category and $G$ is a left
$*$-autonomous monoidal comonad on $\C$ then the monoidal category $\C^G$
is left $*$-autonomous and the strict monoidal underlying functor $U:\C^G
\ra \C$ preserves left internal homs. If $\C$ is $*$-autonomous then so is
$\C^G$ and $U$ preserves left and right internal homs.
\end{corollary}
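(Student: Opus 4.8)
The plan is to observe that the Theorem of Section~\ref{sec-mc} and the Proposition preceding the Definition have already done the substantive work, so that this Corollary is an assembly argument resting on a single transfer-of-isomorphisms observation. Unwinding the definitions, to say that $\C^G$ is left $*$-autonomous is to produce a left dualizing object, and by the discussion following Proposition~\ref{prop-leftright} this amounts to a left raiser $\bar{D}$ whose induced functor $\bar{S}:(\C^G)^\op \ra \C^G$ has a \emph{fully faithful} left adjoint. The candidate is $\bar{D} = \bar{S}(I,\phi_0) = (D, GS\phi_0 \o \nu_I)$. The Theorem of Section~\ref{sec-mc} already shows $\bar{D}$ is a left raiser in $\C^G$, and the Proposition preceding the Definition shows that, under Axioms~5 and~6, $\bar{S}'$ is a left adjoint to $\bar{S}$ with unit $\alpha_A$ and counit $\beta_A$ lifted to $G$-coalgebra morphisms. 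Hence the only outstanding point for the first sentence is that $\bar{S}'$ be fully faithful, i.e.\ that each lifted unit $\alpha_A$ be invertible in $\C^G$.

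Here the one genuinely new ingredient enters: the underlying functor $U:\C^G \ra \C$ reflects (indeed creates) isomorphisms, since the inverse in $\C$ of the underlying map of a $G$-coalgebra morphism is automatically again a $G$-coalgebra morphism. Because $\C$ is left $*$-autonomous, $D$ is left dualizing in $\C$, so $U\alpha_A = \alpha_{UA}$ is invertible in $\C$ for every $G$-coalgebra $(A,\gamma)$; therefore $\alpha_A$ is invertible in $\C^G$. Thus $\bar{S}'$ is fully faithful, $\bar{D}$ is left dualizing, and $\C^G$ is left $*$-autonomous.

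For the preservation claim I would use that $U$ is strict monoidal with $U\bar{S} = SU^\op$ and that $U$ carries $\bar{S}'$ to $S'$ on underlying data. Each left $*$-autonomous category computes its left internal hom as $\lraise{B}{A} \cong S(B \ox S'A)$, as recorded at the end of Section~\ref{sec-raisers}; so the left internal hom of $B$ and $A$ in $\C^G$ is $\bar{S}(B \ox \bar{S}'A)$, and applying $U$ together with strict monoidality yields $S(UB \ox S'UA) \cong \lraise{UB}{UA}$, the left internal hom in $\C$. Hence $U$ preserves left internal homs.

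The $*$-autonomous case repeats the transfer argument on the counit. Now $D$ is dualizing, so $S$ is an equivalence and $\beta$ is invertible in $\C$; by Axiom~6 the lifted $\beta_A$ is a $G$-coalgebra morphism whose underlying map $\beta_{UA}$ is invertible, hence $\beta_A$ is invertible in $\C^G$. With both $\alpha$ and $\beta$ invertible, $\bar{S}$ and $\bar{S}'$ form an (adjoint) equivalence, so $\bar{D}$ is dualizing and $\C^G$ is $*$-autonomous; preservation of the right internal hom then follows from $B^C \cong S'(SB \ox C)$ and the identification of $U\bar{S}'$ with $S'U$. The point requiring care is not any calculation — Axioms~1--6 have already secured the raiser structure, the adjunction, and the coalgebra-morphism status of $\alpha$ and $\beta$ — but rather the clean observation that invertibility of $\alpha$ and $\beta$ descends from $\C$ to $\C^G$ precisely because $U$ reflects isomorphisms; everything else is bookkeeping with the strict monoidal functor $U$.
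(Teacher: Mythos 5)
Your proposal is correct and follows exactly the route the paper intends: the paper offers no explicit proof, presenting the corollary as an immediate consequence of the Theorem of Section~\ref{sec-mc} (which lifts the left raiser) and the Proposition on Axioms~5 and~6 (which lifts the adjunction $S' \dashv S$), and your assembly of these pieces is precisely that argument. The one detail you make explicit that the paper leaves tacit --- that $U$ reflects isomorphisms, so invertibility of $\alpha$ (and, in the dualizing case, $\beta$) descends to $\C^G$ --- is correctly stated and is indeed the only point needing any verification.
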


%=========================================================================%
\section{Monoidal adjunctions and monoidal comonads} \label{sec-moncom}
%=========================================================================%

Now we step back a bit and work in the reverse direction. Suppose $U \dashv
R:\C \ra \A$ with unit $\eta:1 \ra RU$ and counit $\epsilon:UR \ra 1$, such
that $U$ is strong monoidal and the square
\[
\xygraph{{\A^\op}="s"
    :[r(1.6)] {\C^\op} ^-U
    :[d(1.2)] {\C}="t" ^-S
 "s":[d(1.2)] {\A} _-S
    :"t" ^-U}
\]
commutes. Then we obtain a monoidal comonad $G = UR$ on $\C$ as
\begin{align*}
\delta &= U \eta_R : UR \ra URUR~, \\
\epsilon &= \epsilon: UR \ra 1~, \\
\phi &= \xygraph{{\big(\,URA \ox URB}
        :[r(2.5)] {U(RA \ox RB)} ^-\phi
        :[r(3.2)] {URU(RA \ox RB)} ^-{U \eta_{RA \ox RB}}} \\
       & \hspace{18ex} \xygraph{:[r(2.5)] {UR(URA \ox URB)} ^-{UR\phi^{-1}}
        :[r(3.6)] {UR(A \ox B)\,\big)} ^-{UR(\epsilon_A \ox \epsilon_B)}}, \\
\phi_0 &= \big(\xygraph{{I}
                :[r(1.1)] {UI} ^-{\phi_0}
                :[r(1.6)] {URUI} ^-{U \eta_I}
                :[r(1.9)] {URI} ^-{UR\phi_0^{-1}}}\big)~.
\end{align*}
There is also a candidate for $\nu:S \ra GSG$, viz,
\begin{equation}\label{nu-ur}\tag{$\bigstar$}
\xygraph{{S}
    :[r(1.8)] {SUR=USR} ^-{S\epsilon}
    :[r(3.2)] {URUSR=URSUR} ^-{U \eta_{SR}}}.
\end{equation}

\begin{proposition}\label{prop-nu-ur}
Axioms~1 and~2 hold for the data above.
\end{proposition}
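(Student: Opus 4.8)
The plan is to substitute the explicit formula~\eqref{nu-ur} for $\nu$ into Axioms~1 and~2 and reduce everything to the two triangle identities of the adjunction $U \dashv R$, together with naturality of the unit $\eta$ and counit $\epsilon$. The key observation is that, writing the commuting square as $SU = US$, the functor $SG = SUR = USR$ lies in the image of $U$, and the second factor $U\eta_{SR}$ of $\nu$ is the whiskering along $R$ of the canonical $G$-coaction $U\eta_S : US \ra URUS = G\,US$ that the adjunction places on any object of this form. Because $\delta = U\eta_R$, this coaction is counital, $\epsilon_{US} \o U\eta_S = 1$, and coassociative, $\delta_{US} \o U\eta_S = G(U\eta_S) \o U\eta_S$; the former is a triangle identity and the latter reduces to naturality of $\eta$. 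I therefore expect $\nu = U\eta_{SR} \o S\epsilon$ to inherit Axioms~1 and~2 from these two laws, with the factor $S\epsilon$ entering only through naturality and the comonad counit law.

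Axiom~1 should be essentially immediate: $\epsilon_{SG} \o \nu = \epsilon_{SG} \o U\eta_{SR} \o S\epsilon$, and since $SG = USR$ the prefix $\epsilon_{SG} \o U\eta_{SR}$ is exactly the whiskered triangle identity $\epsilon_{USR} \o U\eta_{SR} = 1$, so the composite collapses to $S\epsilon$, as Axiom~1 demands. Axiom~2 is the substantial case. Both legs of its square begin with $\nu$, so I would first apply the coassociativity law above, whiskered along $R$, to the top leg $\delta_{SG} \o \nu = \delta_{USR} \o U\eta_{SR} \o S\epsilon$, rewriting it as $G(U\eta_{SR}) \o U\eta_{SR} \o S\epsilon = G(U\eta_{SR}) \o \nu$. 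The bottom leg is $GGS\delta \o G\nu_G \o \nu$, so both legs now have the form $(\text{prefix}) \o \nu$, and it remains to show that the prefixes $G(U\eta_{SR})$ and $GGS\delta \o G\nu_G$ agree as natural transformations $GSG \ra GGSG$; that is, it remains to prove (and then apply $G$ to) the identity
\[
    U\eta_{S(RY)} = GS\delta_Y \o \nu_{GY} = GS\delta_Y \o U\eta_{S(RGY)} \o S\epsilon_{GY}.
\]

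I would establish this last identity by invoking naturality of the coaction $U\eta_S$ at the morphism $\delta_Y = U\eta_{RY}$ to slide $GS\delta_Y$ across $U\eta_{S(RGY)}$, turning the right-hand side into $U\eta_{S(RY)} \o S\delta_Y \o S\epsilon_{GY}$; the contravariance of $S$ then gives $S\delta_Y \o S\epsilon_{GY} = S(\epsilon_{GY} \o \delta_Y) = S(1) = 1$ by the comonad counit law $\epsilon_G \o \delta = 1$ (itself the first triangle identity whiskered along $R$), whence the identity follows. The main obstacle I anticipate is purely organizational: keeping straight the two copies of $S$ that the commuting square identifies and tracking the various whiskerings of $\eta$ and $\epsilon$ so that the correct naturality square and triangle identity are invoked at each stage. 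Once $\nu$ is recognized as $S\epsilon$ followed by the canonical cofree coaction transported along $R$, Axioms~1 and~2 are precisely the counit and coassociativity laws of that coaction, and no computation beyond the adjunction identities is required.
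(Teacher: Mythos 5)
Your argument is correct and is in substance the same verification the paper gives: the paper's two diagrams for Axioms~1 and~2 encode precisely the whiskered triangle identity $\epsilon_{USR}\o U\eta_{SR}=1$, the naturality squares for $\eta$, and the identification $SU=US$ that you invoke. Your reorganization of Axiom~2 around the identity $GS\delta\o\nu_G=U\eta_{SR}$ (recognizing $\nu$ as $S\epsilon$ followed by the cofree coaction) is just a clean equational rendering of the paper's diagram chase, not a different proof.
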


\begin{proof}
The following diagrams respectively show that Axioms~1~and~2 are
satisfied.
\[
\xygraph{{S}
    :[r(1.3)] {SUR} ^-{S\epsilon}
    :[r(1.5)] {USR} ^-=
    (:[r(2)] {URUSR} ^-{U\eta_{SR}}
        (:[d(1.2)] {USR}="u" ^-{\epsilon_{USR}}
        )
         :[r(1.8)] {URSUR} ^-=
         :[d(1.2)] {SUR}="e" ^-{\epsilon_{SUR}}
    ):"u" {USR}="v" _-1
     :"e" ^-=}
\]
\[
\xygraph{{URSUR}="1" 
    :[d(1.3)] {URSURUR}="2" _-{URS\epsilon}
    :[d(1.3)] {URUSRUR}="3" _-=
    :[d(1.3)] {URURUSRUR}="4" _-{URU\eta}
    :[r(3.2)] {URURUSR}="5" _-{URURUS\eta}
    :[r(2.3)] {URURSUR}="6" _-=
    :[u(1.3)] {URUSR}="8" _-{UR\epsilon}
    :[u(1.3)] {URURUSR}="9" _-{U\eta}
 "2":[r(3.2)] {URSUR}="11" ^-{URSU\eta}
    :[d(1.3)] {URUSR} ^-=
    (:"5" ^-{URU\eta}
    ):"8" ^-1
 "1":[r(5.5)] {URURSUR}="10" ^-{U\eta}
 "1":"11" ^-1
 "9":"10" _-=
}
\]
\end{proof}

Now suppose $\A$ and $\C$ are equipped with natural transformations
$e_{A,B}:S(A \ox B) \ox A \ra SB$ and that $U$ preserves these; that is, the
following diagram commutes.
\[
\vcenter{\xygraph{{US(A \ox B) \ox UA}
   (:[u(1.1)r] {U(S(A \ox B) \ox A)} ^-\phi _-\cong
    :[r(3.4)] {USB} ^-{Ue_{A,B}}
    :[d(1.1)r] {SUB.}="e" ^-=
    )
   :[d(1.1)r] {SU(A \ox B) \ox UA} _-=
   :[r(3.4)] {S(UA \ox UB) \ox UA} _-{S\phi \ox 1}
   :"e" _-{e_{UA,UB}}}}
\]

\begin{proposition}\label{adj-ax3}
In this case, Axiom~3 holds.
\end{proposition}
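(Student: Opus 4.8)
The plan is to verify Axiom~3 by assembling the two hypotheses we are given—the definition~($\bigstar$) of $\nu$ in terms of the adjunction data, and the assumption that $U$ preserves the morphisms $e_{A,B}$—into a single commuting diagram whose boundary is exactly Axiom~3. Since $\nu$ is built out of $S\epsilon$ followed by $U\eta_{SR}$, and since the comonad structure maps $\delta$, $\epsilon$, $\phi$ on $G=UR$ are all expressed via $\eta$, $\epsilon$, and the strong-monoidal structure of $U$, the strategy is to expand every occurrence of $\nu$, $\delta$, and $\phi$ appearing in Axiom~3 back into these generating pieces. The resulting enlarged diagram should then decompose into regions each of which commutes for one of a small number of elementary reasons.

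First I would rewrite the target diagram (Axiom~3) substituting $\nu_{A\ox B}=U\eta_{SR(A\ox B)}\o S\epsilon_{A\ox B}$ and $\nu_B=U\eta_{SRB}\o S\epsilon_B$, and likewise unfold $\delta_A=U\eta_{RA}$ and the various $\phi$'s according to the formulas displayed before Proposition~\ref{prop-nu-ur}. The key input is the $U$-preservation square for $e$: reading it with $A,B$ replaced by $RA,RB$ (or by the appropriate objects forced by the unfolding) gives a commuting hexagon relating $Ue_{RA,RB}$, the coherence isomorphism $\phi$, and $e_{URA,URB}=e_{GA,GB}$. This is the region that introduces the factor $Ge_{GA,GB}$ on the top route of Axiom~3, and it is where the two hypotheses genuinely interact.

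The remaining regions I expect to be of three routine types: \emph{naturality} squares (for $\eta$, $\epsilon$, $\phi$, and for $e$ and $S$ applied to various morphisms), \emph{triangle identities} of the adjunction $U\dashv R$ (which collapse composites like $\epsilon_{USR}\o U\eta_{SR}$ to the identity, paralleling how the same identities were used in the proof of Proposition~\ref{prop-nu-ur}), and \emph{coherence} for the strong monoidal functor $U$ together with the monoidal-comonad axioms for $\phi$ and $\phi_0$. Each of these commutes by a standard argument, so no single one is conceptually hard; the work is bookkeeping.

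The main obstacle will be purely organizational: Axiom~3 is already a six-term diagram, and after unfolding $\nu$ and $\delta$ each into two generating morphisms the diagram roughly doubles in size, so the challenge is to lay out the intermediate objects so that the $U$-preservation hexagon for $e$ sits cleanly against the naturality and triangle-identity regions without forcing an awkward rerouting. I would therefore build the verification as one large commuting diagram, explicitly flag the $e$-preservation region and the triangle-identity collapses, and leave the remaining cells to naturality and monoidal coherence—mirroring the diagrammatic style already used for Axioms~1 and~2 in the preceding proof.
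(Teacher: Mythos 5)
Your plan coincides with the paper's proof: the paper's Figure~\ref{fig-axiom3} is precisely the single large diagram obtained by unfolding $\nu$, $\delta$, and $\phi$ into the adjunction data, with the unlabelled cells closed by naturality, the triangle identities, and monoidal coherence, and with the $U$-preservation of $e$ invoked (at objects of the form $RA$, $RB$) exactly where you place it. The only difference is that the paper isolates the two uses of the $e$-preservation hypothesis and one coherence-heavy cell as separately verified subregions (A), (B), (C), which is just the bookkeeping you anticipated.
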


\begin{proof}
This leads us to examine the diagram in Figure~\ref{fig-axiom3}. It can be
seen that the unlabelled areas of the diagram commute. The area labelled by
(C) commutes by the above assumption and the area labelled by (A) is seen to
commute by examining the following diagram.
\[
\xygraph{{USR(A \ox B) \ox URA}="1"
    :[r(3)u] {URUSR(A \ox B) \ox URURA}="2" ^-{U\eta \ox U\eta}
    :[r(3)d] {URSUR(A \ox B) \ox URURA}="3" ^-=
    :[d(1.3)] {U(RSUR(A \ox B) \ox RURA)}="4" ^-\phi
    :[d(1.3)] {URU(RSUR(A \ox B) \ox RURA)}="5" ^-{U\eta}
    :[d(1.3)] {UR(URSUR(A \ox B) \ox URURA)}="6" ^-{UR\phi^{-1}}
    :[d(1.3)l] {UR(SUR(A \ox B) \ox URA)}="7" ^-{UR(\epsilon \ox \epsilon)}
 "1":[d(1.3)] {U(SR(A \ox B) \ox RA)}="8" _-\phi
    :[d(1.3)] {URU(SR(A \ox B) \ox RA)}="9" _-{U\eta}
    :[d(1.3)] {UR(USR(A \ox B) \ox URA)}="10" _-{UR\phi^{-1}}
    :[d(1.3)r] {UR(USR(A \ox B) \ox URA)}="11" _-{UR\phi^{-1}}
 "2":[d(1.3)] {U(RUSR(A \ox B) \ox RURA)}="12" ^-{U\eta \ox U\eta}
    (:"4" ^-=
    ):[d(1.3)] {URU(RUSR(A \ox B) \ox RURA)}="13" ^-{U\eta}
    (:"5" ^-=
    ):[d(1.3)] {UR(URUSR(A \ox B) \ox URURA)}="14" ^-{U\phi^{-1}}
    (:"6" ^-=
    ):"11" ^-{UR(\epsilon \ox \epsilon)}
 "8":"12" |-{U(\eta \ox \eta)}
 "9":"13" |-{URU(\eta \ox \eta)}
"10":"14" |-{UR(U\eta \ox U\eta)}
"11":"7" ^-=}
\]
To see that the region labelled by (B) commutes observe that the following
diagram commutes.
\[
\scalebox{0.9}{
\xygraph{{S(URA \ox URB) \ox URA}="1"
    :[d(1.2)] {SUR(URA \ox URB) \ox URA}="2" _-{S\epsilon \ox 1}
    :[d(1.2)] {USR(URA \ox URB) \ox URA}="3" _-=
    :[d(1.2)] {U(SR(URA \ox URB) \ox RA)}="4" _-{\phi}
    :[r(3.8)d(0.8)] {U(SRU(RA \ox RB) \ox RA)}="5"
    :[r(3.6)u(0.8)] {U(S(RA \ox RB) \ox RA)}="6"
 "1":[r(3.8)u(0.8)] {SU(RA \ox RB) \ox URB}="7" ^-{S\phi^{-1} \ox 1}
    :[r(3.6)d(0.8)] {US(RA \ox RB) \ox URB}="8" ^-=
 "2":[r(3.8)u(0.8)] {SURU(RA \ox RB) \ox URB}="9" ^-{SUR\phi^{-1} \ox 1}
    :[r(3.6)d(0.8)] {SU(RA \ox RB) \ox URB}="10" ^-{SU\eta}
 "3":[r(3.8)d(0.8)] {USRU(RA \ox RB) \ox URB}="11" ^-{USR\phi^{-1} \ox 1}
    :[r(3.6)u(0.8)] {US(RA \ox RB) \ox URB}="12" ^-{US\eta}
 "4":"5" _<(0.2){U(SR\phi^{-1} \ox 1)\quad}
 "5":"6" _<(0.4){\quad U(S\eta \ox 1)}
 "7":"9" ^-{S\epsilon \ox 1}
 "8":"10" ^-=
 "9":"11" ^-=
"10":"12" ^-=
"11":"5" ^-\phi
"12":"6" ^-\phi}
}
\]
By our assumption the $U$ preserves $e$, the upper route of the above
diagram is
\[
\xygraph{{S(URA \ox URB) \ox URA}
    :[r(2.6)] {SURB} ^-e
    :[r(1.5)] {USRB}="3" ^-1
    [r(2.7)] {U(S(RA \ox RB) \ox RA)}
    :"3" _-{Ue}}
\]
which then shows the commutativity of the region labelled by (B).
\end{proof}

\begin{sidewaysfigure}
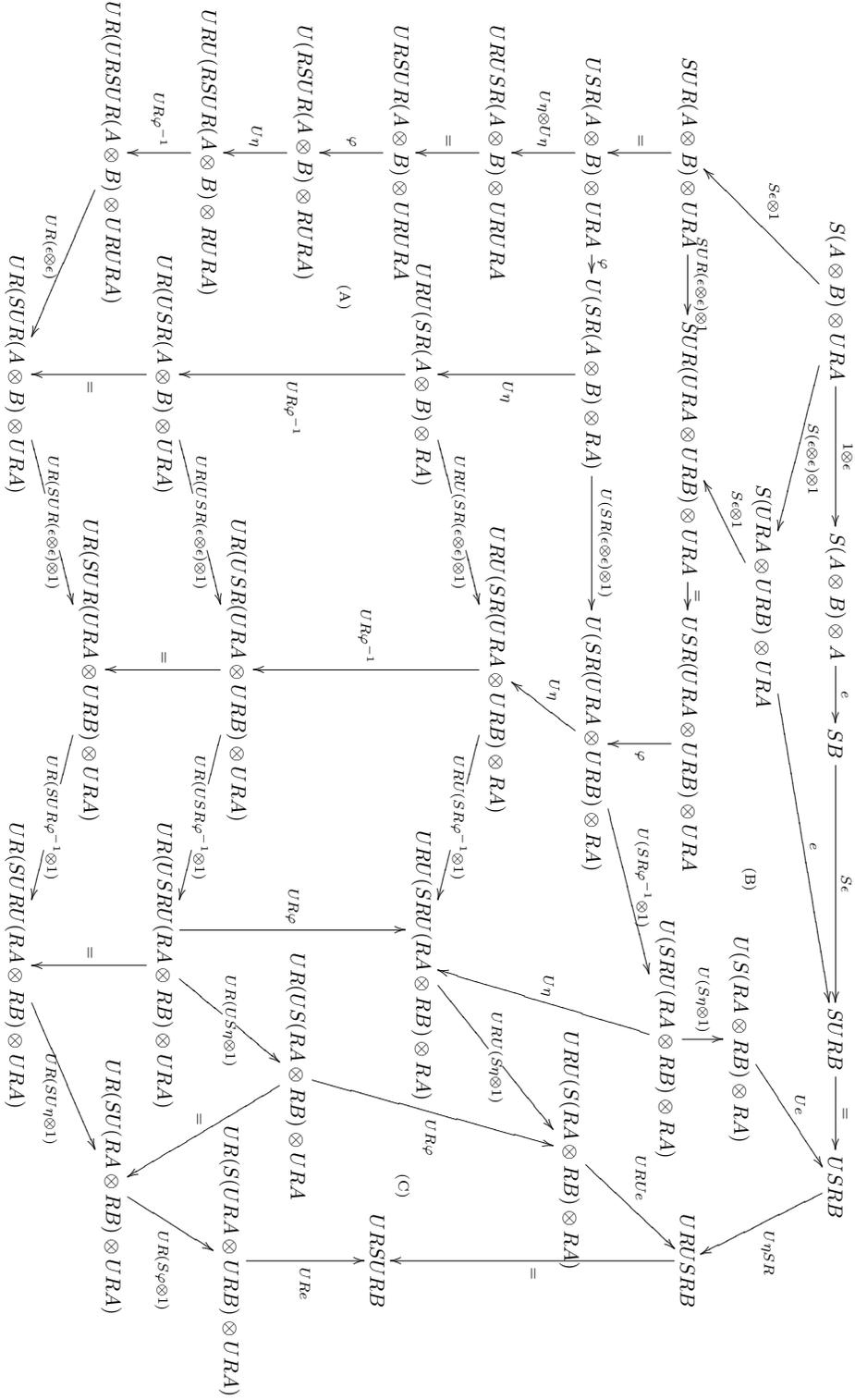
\centering
\[
\scalebox{0.84}{
\xygraph{{S(A \ox B) \ox URA}="1"
    :[d(2)l(2)] {SUR(A \ox B) \ox URA}="2" _-{S \epsilon \ox 1}
    :[d(1.3)] {USR(A \ox B) \ox URA}="3" _-=
    :[d(1.3)] {URUSR(A \ox B) \ox URURA}="4" _-{U\eta \ox U\eta}
    :[d(1.3)] {URSUR(A \ox B) \ox URURA}="5" _-=
    :[d(1.3)] {U(RSUR(A \ox B) \ox RURA)}="6" _-\phi
    :[d(1.3)] {URU(RSUR(A \ox B) \ox RURA)}="7" _-{U\eta}
    :[d(1.3)] {UR(URSUR(A \ox B) \ox URURA)}="8" _-{UR\phi^{-1}}
    :[r(3)d(1.3)] {UR(SUR(A \ox B) \ox URA)}="9" _-{UR(\epsilon \ox \epsilon)}
    :[r(4)u] {UR(SUR(URA \ox URB) \ox URA)}="10" |-{UR(SUR(\epsilon \ox
                                                  \epsilon) \ox 1)}
    :[r(4)d] {UR(SURU(RA \ox RB) \ox URA)}="11" |-{UR(SUR\phi^{-1} \ox 1)}
    :[r(3)u(1.3)] {UR(SU(RA \ox RB) \ox URA)}="12" _-{UR(SU\eta \ox 1)}
    :[u(1.6)r] {UR(S(URA \ox URB) \ox URA)}="13" _-{UR(S\phi \ox 1)}
    :[u(2)] {URSURB}="14" _-{URe}
 "1":[r(4)] {S(A \ox B) \ox A}="24" ^-{1 \ox \epsilon}
    :[r(2)] {SB}="25" ^-{e}
    :[r(4)] {SURB}="26" ^-{S\epsilon}
    :[r(2)] {USRB}="27" ^-=
    :[r(1)d(2)] {URUSRB}="28" ^-{U\eta SR}
    :"14" ^-=
 "1":[r(4)d] {S(URA \ox URB) \ox URA}="44" ^-{S(\epsilon \ox \epsilon) \ox 1}
    :"26" ^-e
 "2":[r(4)] {SUR(URA \ox URB) \ox URA}="45" ^-{SUR(\epsilon \ox \epsilon) \ox 1}
    :[r(4)] {USR(URA \ox URB) \ox URA} ^-=
    :[d(1.3)] {U(SR(URA\ox URB)\ox RA)}="21" ^-\phi
"44":"45" _-{S\epsilon \ox 1}
 "3":[r(3)] {U(SR(A \ox B) \ox RA)}="16" ^-\phi
    :[d(2.3)] {URU(SR(A \ox B) \ox RA)}="17" ^-{U\eta}
    :[d(3.5)] {UR(USR(A \ox B) \ox URA)}="18" ^-{UR\phi^{-1}}
    :"9" ^-=
"16":"21"  ^-{U(SR(\epsilon \ox \epsilon)\ox 1)}
    :[r(4)u] {U(SRU(RA \ox RB) \ox RA)}="22" ^-{U(SR\phi^{-1} \ox 1)}
    :[u] {U(S(RA \ox RB) \ox RA)}="23" ^-{U(S\eta \ox 1)}
    :"27" ^-{Ue}
"17":[r(4)u] {URU(SR(URA \ox URB) \ox RA)}="29" |-{URU(SR(\epsilon \ox
                                                  \epsilon) \ox 1)}
    :[r(4)d] {URU(SRU(RA \ox RB) \ox RA)}="30" |-{URU(SR\phi^{-1} \ox 1)}
    :[r(2.5)u(2)] {URU(S(RA \ox RB) \ox RA)}="31" |-{URU(S\eta \ox 1)}
"21":"29" _-{U\eta}
"22":"30" _-{U\eta}
"31":"28" ^-{URUe}
"18":[r(4)u] {UR(USR(URA \ox URB) \ox URA)}="19" |-{UR(USR(\epsilon \ox
                                                    \epsilon) \ox 1)}
    (:"10" _-=
    ):[r(4)d] {UR(USRU(RA \ox RB) \ox URA)}="20" |-{UR(USR\phi^{-1} \ox 1)}
    (:"11" _-=
    ):[u(1.8)r(1.5)] {UR(US(RA \ox RB) \ox URA}="32" |-{UR(US\eta \ox 1)}
"29":"19" _-{UR\phi^{-1}}
"32":"12" _-=
"32":"31" _-{UR\phi}
"20":@<4ex>"30" ^-{UR\phi}
 "8":@<2ex>@{}"17" |<(0.7){\text{(A)}}
"21":@<4ex>@{}"26" |<(0.5){\text{(B)}}
"13":@<2ex>@{}"31" |<(0.5){\text{(C)}}}
}
\]
\caption{Proof of Proposition~\ref{adj-ax3}}\label{fig-axiom3}
\end{sidewaysfigure}

\begin{remark}
By Yoneda, Axiom~3 is equivalent to the commutativity of 
\[
\vcenter{\xygraph{{\C(A \ox B,C)}="s"
    :[r(2.8)] {\C(SC \ox A,SB)} ^-\pi
    :[r(4)] {\C(SC \ox GA,GSGB)}="e" ^-{\C(1 \ox \epsilon_A,\nu_B)}
 "s":[d(1.2)] {\C(G(A \ox B),GC)} _-G
    :[d(1.2)] {\C(GA \ox GB,GC)} _-{\C(\phi_{A,B},1)}
    :[r(3.1)] {\C(SGC \ox GA,SGB)} ^-\pi
    :[r(3.7)] {\C(G(SGC \ox GA),GSGB).} ^-G
    :[u(1.2)] {\C(GSGC \ox G^2A,GSGB)} _-{\C(\phi_{SGC,GA},1)}
    :"e" _-{\C(\nu_C \ox \delta_A,1)}}}
\]
\end{remark}

\begin{proposition}
The formula for $\nu$ given in~\eqref{nu-ur} recovers the original
$\nu$ when applied to the adjunction $U \dashv R:\C \ra \C^G$ in the
setting of Proposition~\ref{prop-nu-ur}.
\end{proposition}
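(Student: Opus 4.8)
The plan is to write out the candidate transformation produced by the formula $(\bigstar)$ explicitly for the Eilenberg--Moore adjunction and then collapse it using naturality of $\nu$ together with a counit law of the comonad. First I would record the concrete data of $U \dashv R : \C \to \C^G$: the functor $U$ is the (strict monoidal) forgetful functor, $R$ is the cofree functor $RA = (GA,\delta_A)$, the counit $\epsilon : UR \to 1_\C$ is the comonad counit, and for any $G$-coalgebra $(X,\xi)$ the unit component $\eta_{(X,\xi)} : (X,\xi) \to RU(X,\xi) = (GX,\delta_X)$ is the structure map $\xi$ itself, this being precisely the coassociativity axiom; hence $U\eta_{(X,\xi)} = \xi$. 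Since the lift satisfies $U\bar{S} = SU$ and $U$ is strong monoidal, these data fit the hypotheses of Proposition~\ref{prop-nu-ur}, and evaluating $(\bigstar)$ at an object $A$ gives the candidate
\[
    \tilde{\nu}_A = U\eta_{\bar{S}RA} \o S\epsilon_A : SA \to SGA \to GSGA,
\]
where $SGA = U\bar{S}RA$ and $GSGA = URU\bar{S}RA$ under the identifications coming from $U\bar{S} = SU$ and $G = UR$.

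Next I would evaluate the two factors. Applying the formula $\bar{S}(A,\gamma) = (SA, GS\gamma \o \nu_A)$ to $RA = (GA,\delta_A)$ yields $\bar{S}RA = (SGA,\, GS\delta_A \o \nu_{GA})$, so that the structure map of $\bar{S}RA$, and therefore $U\eta_{\bar{S}RA}$, equals $GS\delta_A \o \nu_{GA}$. Thus $\tilde{\nu}_A = GS\delta_A \o \nu_{GA} \o S\epsilon_A$.

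The one substantive step is to collapse this composite. Because $S$ is contravariant, naturality of $\nu : S \to GSG$ at the morphism $\epsilon_A : GA \to A$ reads $GSG\epsilon_A \o \nu_A = \nu_{GA} \o S\epsilon_A$; substituting gives $\tilde{\nu}_A = GS\delta_A \o GSG\epsilon_A \o \nu_A$. Functoriality of $G$ together with the contravariance of $S$ combines the two middle factors into $GS(G\epsilon_A \o \delta_A)$, and the comonad counit law $G\epsilon \o \delta = 1$ makes this $GS(1_{GA}) = 1_{GSGA}$. Hence $\tilde{\nu}_A = \nu_A$, as required. I expect the only real difficulty to be bookkeeping: correctly identifying the adjunction unit with the coalgebra structure maps, and threading the contravariance of $S$ through the naturality square so that the arrows $S\epsilon_A$, $\nu_{GA}$, and $GSG\epsilon_A$ compose as stated. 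Once these are pinned down, the argument reduces to a single naturality square followed by one application of the counit axiom, with no genuine obstacle.
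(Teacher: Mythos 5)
Your proof is correct and follows essentially the same route as the paper: identify $U\eta_{\bar S R A}$ with the coalgebra structure map $GS\delta_A \circ \nu_{GA}$ of $\bar S R A$, then apply naturality of $\nu$ at $\epsilon_A$ and the counit law $G\epsilon \circ \delta = 1$. The paper merely packages this same computation as a single commuting diagram.
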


\begin{proof}
We have that $U(A,\gamma) = A$, $RX = (GX,\delta_X)$, $\epsilon:UR \ra 1$ is
just the counit $\epsilon$ of the comonad, and $\eta:1 \ra RU$ has components
$\gamma:(A,\gamma) \ra (GA,\delta_A)$. So the $\nu$ given in~\eqref{nu-ur}
becomes
\[
\xygraph{{SX}="s"
    :[d(1.5)] {SGX} _-{S\epsilon_X}
    :[r(2)] {GSG^2 X}="w" _-{\nu_{GX}}
    :[r(2.2)] {GSGX~.}="e" _-{GS\delta_X}
 "s":[r(2)] {GSGX} ^-{\nu_X}
    (:"w" _-{GSG \epsilon_X})
    :"e" ^-1}
\]
\end{proof}

Now suppose $\A$ and $\C$ are equipped with morphisms $n_I:I \ra SSI$
and that $U$ preserves this; that is, the following diagram commutes.
\[
\vcenter{\xygraph{{I}="0"
    :[d(1.2)] {UI} _-{\phi_0}
    :[r(2.8)] {USSI} ^-{U n_I}
    :[u(1.2)] {SSUI}="1" _-=
 "0":[r(1.2)] {SSI} ^-{n_I}
    :"1" ^-{SS \phi}}}
\]

\begin{proposition}\label{adj-ax4}
In this case, Axiom~4 holds.
\end{proposition}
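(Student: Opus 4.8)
The plan is to imitate the proof of Proposition~\ref{adj-ax3}: expand every ingredient of Axiom~4 in terms of the adjunction data and then reduce the resulting diagram to the hypothesis that $U$ preserves $n_I$, using only naturality and the triangle identities. Concretely, I would first substitute $G = UR$, the counit $\epsilon$, the formula $\nu = U\eta_{SR} \o S\epsilon$ from \eqref{nu-ur}, and the three-step expression $\phi_0 = UR\phi_0^{-1} \o U\eta_I \o \phi_0$ for the induced unit constraint of $G$ (here the inner $\phi_0$'s denote the unit constraint of the strong monoidal $U$, which is invertible) into both legs of Axiom~4. A pleasant simplification over Proposition~\ref{adj-ax3} is that, since $n_I$ concerns only the unit object, the binary constraint $\phi$ never appears: the entire calculation lives off $\eta$, $\epsilon$, $\phi_0$, $\phi_0^{-1}$, and $n_I$. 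Because $S$ is contravariant and the square $US = SU$ is used to make the identifications $SUR = USR$ and $URSUR = URUSR$, the first task is simply to check that after these substitutions every arrow points the right way; the term $GSGS\phi_0$ is the functor $GSGS$ (net covariant, two contravariant $S$'s) applied to the three-fold composite defining $\phi_0$, and threading the orientations through it is where I expect to spend most of the care.

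Next I would peel the expanded diagram into regions. Each of the two occurrences of $\nu$ splits as $U\eta_{SR}$ after $S\epsilon$, and I would push the $S\epsilon$ and $U\eta$ factors through the rest of the diagram by naturality of $\eta$ and $\epsilon$ together with functoriality of $U$, $R$, and $S$. Wherever an $\eta$ meets the matching $\epsilon$ I would collapse it by a triangle identity ($\epsilon_U \o U\eta = 1$ and $R\epsilon \o \eta_R = 1$); this is the same mechanism that, in the proof of Proposition~\ref{prop-nu-ur}, turned composites such as $\epsilon_{USR} \o U\eta_{SR}$ back into identities. After these collapses the two legs should reduce to a single region built only from $n_I$, $\phi_0$, and $SS\phi_0$.

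That remaining region is precisely the preservation square assumed in the statement, namely $SS\phi_0 \o n_I = Un_I \o \phi_0$ read through the identification $SSUI = USSI$; feeding in this equality closes the diagram and yields Axiom~4. The main obstacle is not any single clever step but the bookkeeping forced by the contravariance of $S$ and by the $SU = US$ coherence when expanding $\phi_0$ and $GSGS\phi_0$ — getting each of those arrows oriented correctly and matched against the preservation square. Once the orientations are fixed, the remaining regions commute for exactly the formal reasons (naturality, functoriality, triangle identities) already exploited in Proposition~\ref{adj-ax3}, while the hypothesis supplies the one genuinely new region, so the two legs of Axiom~4 coincide.
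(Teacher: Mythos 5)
Your plan coincides with the paper's own proof: the diagram in Figure~\ref{fig-adj-ax4} is exactly the expansion you describe, its unlabelled regions commuting by naturality, functoriality and the identification $SU=US$, one region collapsing by the triangle identity $\epsilon_U \circ U\eta = 1$, and the remaining regions being instances of the hypothesis that $U$ preserves $n_I$ (which the paper invokes twice, once directly and once after applying $UR$ and inverting $\phi_0$). What is missing is only the explicit bookkeeping, which you correctly identify as the sole difficulty.
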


\begin{sidewaysfigure}
\[
\scalebox{0.95}{
\xymatrix@R=3em{
    {I} \ar[rr]^-{n_I} \ar[d]_-{\phi_0} \ar@{}[drr]|{\text{(A)}} &&
    S^2 I \ar[d]^-{S^2 \phi_0} \ar[r]^-{S \epsilon} &
    {SURSI} \ar[d]^-{SURS\phi_0} \ar[r]^-= &
    {USRSI} \ar[d]^{USRS \phi_0} \ar[r]^-{U \eta} &
    {URUSRSI} \ar[d]^{URUSRS \phi_0} \ar[r]^-= &
    {URSURSI} \ar[d] ^-{URSURS \phi_0}
\\
    UI \ar[d]_-{U\eta} \ar[r]^-{Un_I} &
    US^2I \ar[r]^-= \ar[d]^-{U\eta} &
    S^2UI \ar[d]^-{S^2U\eta} \ar[r]^-{S \epsilon} &
    SURSUI \ar[d]^-{SURSU\eta} \ar[r]^-= &
    USRSUI \ar[d]^-{USRSU\eta} \ar[r]^-{U\eta} &
    URUSRSUI \ar[d]^-{URUSRSU\eta} \ar[r]^-= &
    URSURSUI \ar[d]^-{URSURSU\eta}
\\
    URUI \ar[dd]_{UR\phi^{-1}_0} \ar[r]^{URUn_I} \ar@{}[ddr]|{\text{(A)}} &
    URUS^2I \ar[d]^-= &
    S^2URUI \ar[d]^-{S^2UR\phi^{-1}_0} \ar[r]^-{S \epsilon} &
    SURSURUI \ar[d]^-{SURSUR\phi^{-1}_0} \ar[r]^-= &
    USRSURUI \ar[d]^-{USRSUR\phi^{-1}_0} \ar[r]^-{U\eta} &
    URUSRSURUI \ar[d]^-{URUSRSUR\phi^{-1}_0} \ar[r]^-= &
    URSURSURUI \ar[d]^-{URSURSUR\phi^{-1}_0}
\\
    &
    URS^2UI \ar[d]^-{URS^2\phi^{-1}_0} &
    S^2URI \ar[d]^-= \ar[r]^-= &
    SURSURI \ar[d]^-= \ar[r]^-= &
    USRSURI \ar[d]^-= \ar[r]^-= &
    URUSRSURI \ar[d]^-= \ar[r]^-= &
    URSURSURI \ar[d]^-=
\\
    URI \ar[r]^-{URn_I} &
    URS^2I \ar[ddrr]_-1 &
    SUSRI \ar[dr]^-1 \ar[r]^-{S\epsilon} \ar@{}[d]|{\text{(B)}}&
    SURUSRI \ar[d]^-{SU\eta} \ar[r]^-= &
    USRUSRI \ar[d]^-{US\eta} \ar[r]^-{U\eta} &
    URUSRUSRI \ar[d]^-{URUS\eta} \ar[r]^-= & 
    URSURUSRI \ar[d]^-{URSU\eta}
\\
    &&&
    SUSRI \ar[r]^-= &
    US^2RI \ar[r]^-{U\eta} &
    URUS^2RI \ar[d]^-= &
    URSUSRI \ar[dl]^-=
\\
    &&&
    URS^2I &&
    URS^2URI \ar[ll]_-{URS^2\epsilon}}
}
\]
\caption{Proof of Proposition~\ref{adj-ax4}}\label{fig-adj-ax4}
\end{sidewaysfigure}

\begin{proof}
In the diagram in Figure~\ref{fig-adj-ax4} the unlabelled regions are easily
seen to commute, the two regions labelled by (A) commute by our assumption
that $U$ preserves $n_I$, and the region labelled by (B) is seen to commute
from the following diagram.
\[
\xygraph{{US^2I}="0"
    :[r(2.5)] {S^2UI}="1" ^-= 
    :[r(2.5)] {S^2URUI}="2" ^-{S^2U\eta}
    :[d] {S^2UI}="3" ^-{S^2UR\phi^{-1}_0}
    :[d] {US^2I}="4" ^-=
    :[d] {URUS^2I}="5" ^-{U\eta}
    :[d] {URS^2UI}="6" ^-=
    :[l(2.5)d] {URS^2I}="7" ^-=
 "0":[r(2.5)d] {US^2RUI}="8" _-{US^2\eta}
 "8":"2" _-=
 "8":"4" |-{US^2R\phi^{-1}_0}
 "8":[d] {URUS^2RUI}="9" _-{U\eta}
    :"5" |-{URUS^2R\phi^{-1}_0}
 "9":[d] {URS^2URUI}="10" _-=
    :[d] {URS^2UI}="11" ^-{URS^2\epsilon}
    :"7" _-{URS^2\phi^{-1}_0}
 "0":[dd] {URUS^2I}="13" _-{U\eta}
    :[d] {URS^2UI}="14" _-=
    :"11" _-1
"13":"9" ^-{URUS^2\eta}
"14":"10" ^-{URS^2U\eta}
}
\]
\end{proof}

Finally, suppose that in $\A$ and $\C$ we have $S' \dashv S$ with unit $1
\ra SS'$ and counit $1 \ra S'S$ and that $U$ preserves these, meaning both
\[
    \vcenter{\xygraph{{U}="s"
        :[r(1.6)] {USS'} ^-{U \alpha}
        :[d(1.2)] {SS'U}="t" ^-=
     "s":"t" _-{\alpha_U}}}
\quad\qquad \text{and} \quad\qquad
    \vcenter{\xygraph{{U}="s"
        :[r(1.6)] {US'S} ^-{U \beta}
        :[d(1.2)] {S'SU}="t" ^-=
     "s":"t" _-{\beta_U}}}
\]
commute.

\begin{proposition}
In this case, both Axioms~5 and~6 hold.
\end{proposition}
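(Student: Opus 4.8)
The plan is to proceed exactly as in the diagram chases for Axioms~3 and~4 (Propositions~\ref{adj-ax3} and~\ref{adj-ax4}). First I would record explicit descriptions of the transformations appearing in Axiom~5. Here $\nu$ is the composite~\eqref{nu-ur}, and $\nu'$ is defined by the composite of the same shape,
\[
\nu' = \big(S' \xrightarrow{\,S'\epsilon\,} S'UR \cong US'R \xrightarrow{\,U\eta_{S'R}\,} URUS'R \cong URS'UR\big),
\]
the unlabelled isomorphisms being instances of the commuting square $US \cong SU$ and its companion $US' \cong S'U$ (the latter is itself a formal consequence of the assumed preservation of $\alpha$ and $\beta$ by $U$). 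Both $\nu$ and $\nu'$ are thus assembled purely from the unit $\eta$ and counit $\epsilon$ of $U \dashv R$ together with these structural isomorphisms. Substituting the two formulas into the legs $G\alpha$ and $GS\nu' \o \nu_{S'G} \o \alpha_G$ of Axiom~5 rewrites that axiom as the outer boundary of a single large diagram.

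The bulk of this diagram splits into regions that commute for formal reasons only: naturality of $\eta$, $\epsilon$ and of the components of $\alpha$; functoriality of $U$, $R$, $S$ and $S'$; and the two triangle identities $\epsilon_U \o U\eta = 1$ and $R\epsilon \o \eta_R = 1$ for $U \dashv R$. Exactly one region is not formal, and it is supplied by the hypothesis that $U$ preserves the unit of $S' \dashv S$, namely that $\alpha_U$ equals $U\alpha$ followed by the isomorphism $USS' \cong SS'U$. Inserting this triangle at the appropriate place collapses the remaining cells and identifies the two legs of Axiom~5. Equivalently, one may invoke the earlier proposition identifying Axiom~5 with the statement that each $\alpha_A:(A,\gamma) \ra \bar{S}\bar{S}'(A,\gamma)$ is a $G$-coalgebra morphism, reduce as in its proof to the cofree coalgebras $(GB,\delta_B)$, and then recognise the required commutativity as precisely the preserved-unit triangle transported across $U \dashv R$.

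Axiom~6 follows by the mirror-image argument, interchanging $S$ with $S'$, $\nu$ with $\nu'$, and the unit $\alpha$ with the counit $\beta$, and using the second preservation triangle ($\beta_U$ equals $U\beta$ followed by $US'S \cong S'SU$) in place of the first. I expect the only real difficulty to be bookkeeping: since $S$ and $S'$ are contravariant, one must track variances carefully so that $\eta$, $\epsilon$, the square isomorphisms, and the preservation triangle are each applied on the correct side, and so that the single non-formal region lands exactly where the hypothesis applies. Once the diagram is laid out in the manner of Figures~\ref{fig-axiom3} and~\ref{fig-adj-ax4}, the remaining verification is routine.
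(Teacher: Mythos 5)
Your plan matches the paper's proof: the paper establishes Axiom~5 by exhibiting exactly the large diagram you describe, whose regions commute by naturality, functoriality, and the triangle identities for $U \dashv R$, except for the single cell supplied by the hypothesis that $U$ preserves $\alpha$ (the $\nu'$ used there is indeed the composite of the same shape as~\eqref{nu-ur}, with $US' \cong S'U$ in place of $US \cong SU$), and Axiom~6 is then dispatched by the mirror-image diagram using the $\beta$-triangle. This is essentially the same argument, so no further comment is needed.
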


\begin{proof}
The commutativity of the following diagram proves that Axiom~5 holds and
Axiom~6 is proved with a similar diagram.
\[
\xygraph{{G=UR}="a"
    [r(2.4)] {USS'R}="b"
    [d(1)r(2.8)] {URUSS'R}="c"
 "a"[u(1)r(2.4)] {URUR}="d"
    [d(1)r(2.8)] {URSS'UR}="e"
    [d(1)r(2)] {URSS'}="f"
 "d"[r(2.5)] {UR}="g"
 "a":"d" ^-{U\eta_R} 
 "a":"b" ^-{U\alpha_R}
 "b":"c" _-{U\eta_{SS'R}}
 "c":"e" _-=
 "d":"c" |-{URU\alpha_R} 
 "d":"e" ^-{UR\alpha_{UR}} 
 "d":"g" ^-{UR\epsilon} 
 "e":"f" _-{URSS'\epsilon} 
 "g":"f" ^-{UR\alpha} 
 "a"[d] {SS'UR}="1"
    [d] {SURS'UR}="2"
    [d] {USRS'UR}="3"
    [d] {URUSRS'UR}="4"
    [d] {URSURS'UR}="5"
 "b"[d] {SUS'R}="6"
    [d] {SURUS'R}="7"
    [d] {USRUS'R}="8"
    [d] {URUSRUS'R}="9"
    [d] {URSURUS'R}="10"
 "c"[d] {SUS'R}="11"
    [d] {USS'R}="12"
    [d] {URUSS'R}="13"
    [d] {URSUS'R}="14"
 "f"[d(2)] {URSS'UR}="15"
 "a":"1" _-{\alpha_G}
 "b":"6" _-=
 "1":"6" ^-=
 "1":"2" _-{S\epsilon_{S'UR}}
 "2":"7" ^-=
 "2":"3" _-=
 "3":"8" ^-=
 "3":"4" _-{U\eta_{SRS'UR}}
 "4":"9" ^-=
 "4":"5" _-=
 "5":"10" _-=
 "6":"11" ^-1
 "6":"7" _-{S\epsilon_{US'R}}
 "7":"11" ^-{SU\eta_{S'R}}
 "7":"8" _-= %{S\epsilon_{US'R}}
 "8":"12" ^-{US\eta_{S'R}}
 "8":"9" _-{U\eta_{SRUS'R}}
 "9":"13" ^-{URUS\eta_{S'R}}
 "9":"10" _-=
 "10":"14" _-{URSU\eta_{S'R}}
 "11":"12" _-=
 "12":"13" _-{U\eta_{SS'R}}
 "13":"14" _-=
 "13":"15" ^-=
 "14":"15" _-=
 "15":"f" _-{URSS'\epsilon}
}
\]
\end{proof}

%=========================================================================%
\section{The Hopf algebra example} \label{sec-hopfeg}
%=========================================================================%

In this section we show that any Hopf algebra with bijective antipode $H$
in a braided
$*$-autonomous category $\C$ gives rise to a $*$-autonomous monoidal comonad
$G:\C \ra \C$ defined on objects by $GX = \lraise{H}{X}$. A left adjoint for
this $G$ is given by $T = H \ox -$ and so, by Proposition~\ref{prop-gtalg},
there is a bijection between $G$-coalgebras $X \ra X^H$ and $T$-algebras
$H \ox X \ra X$. As tensors are easier to work with here than homs, we will
take this latter view in the remainder of this section and show that $T$ is a
$*$-autonomous opmonoidal monad.

For the functor $T = H \ox -:\C \ra \C$, the data of a $*$-autonomous monad
is given as follows.
\begin{align*}
\mu &= \xygraph{{H \ox H \ox X} :[r(2.2)] {H \ox X} ^-{\mu \ox 1}} \\
\eta &= \xygraph{{X} :[r(1.6)] {H \ox X} ^-{\eta \ox 1}} \\
\psi &= \big(\xygraph{{H \ox X \ox Y}
        :[r(3)] {H \ox H \ox X \ox Y} ^-{\delta \ox 1 \ox 1}
        :[r(3.1)] {H \ox X \ox H \ox Y} ^-{1 \ox c \ox 1}}\big) \\
\psi_0 &= \xygraph{{H} :[r(1.1)] {I} ^-{\epsilon}} \\
\nu &= \big(\xygraph{{H \ox S(H \ox X)}
        :[r(2.6)] {S(H \ox X) \ox H} ^-c
        :[r(2.8)] {S(H \ox X) \ox H} ^-{1 \ox \nu}
        :[r(2.1)] {SX} ^-e}\big) \\
\nu' &= \big(\xygraph{{H \ox S'(H \ox X)}
        :[r(3.7)] {H \ox S'(X \ox H)} ^-{\nu^{-1} \ox S'c^{-1}}
        :[r(2.1)] {S'X} ^-{e'}}\big)
\end{align*}

That $T$ forms an opmonoidal monad is standard and so in the remainder of
this section we concentrate on showing that this data satisfies the axioms
of a $*$-autonomous monad.

%=========================================================================%
\subsection{Lifting the internal hom}
%=========================================================================%

We begin with Axiom~3. The remainder of the axioms are straightforward and
will be proved in Section~\ref{subsec-remainder}. Axiom~3 is meant to express
that $e_{A,B}:S(A \ox B) \ox A \ra SB$ is a morphism of $T$-algebras, i.e., a
morphism of left $H$-modules. We will prove this statement directly, instead
of proving Axiom~3. It follows from the more general statement in
Proposition~\ref{prop-ealg} below.

In this section we need only assume that $\C$ is a braided closed category.
Suppose $H$ is a Hopf algebra in $\C$ and that $M,N \in \C$ are left
$H$-modules. Denote by $e_{M,N}:\MN \ox M \ra N$ the morphism obtained by
evaluating the isomorphism $\C(L,{}^M N) \cong \C(L \ox M,N)$ at the identity.

Note that we have the composites
\begin{align*}
& \xygraph{{H \ox \MN \ox M}
    :[r(2.4)] {H \ox N} ^-{1 \ox e}
    :[r(1.4)] {N} ^-{\mu}} \\
& \xygraph{{\MN \ox H \ox M}
    :[r(2.4)] {\MN \ox M} ^-{1 \ox \mu}
    :[r(1.5)] {N} ^-{e}} \\
\end{align*}
which, under the isomorphism $\C(L \ox M,N) \cong \C(L,{}^M N)$, become
respectively left and right actions of $H$ on $\MN$:
\begin{align*}
\mu_l &:H \ox \MN \ra \MN \\
\mu_r &:\MN \ox H \ra \MN
\end{align*}
It is not too difficult to see that these actions make $\MN$ into a
$H$-bimodule or a left $H \ox H^\op$-module. Restriction of scalars along
the algebra morphism
\[
\xygraph{{H}
    :[r(1.4)] {H \ox H} ^-\delta 
    :[r(2)] {H \ox H^\op} ^-{1 \ox \nu}}
\]
makes $\MN$ into a left $H$-module.

\begin{proposition}\label{prop-ealg}
The evaluation morphism $e_{M,N}: \MN \ox M \ra N$ is a morphism of
$H$-modules.
\end{proposition}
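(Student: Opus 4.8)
The plan is to verify $H$-linearity directly, by transposing through the internal-hom adjunction $\C(L\ox M,N)\cong\C(L,\MN)$ and reducing the statement to a single identity between two maps $H\ox\MN\ra\MN$. Write $\rho:H\ox\MN\ra\MN$ for the left $H$-action obtained by restricting the bimodule (i.e.\ left $H\ox H^\op$-module) structure along $(1\ox\nu)\o\delta$, and give $\MN\ox M$ the tensor-product module structure $(\rho\ox\mu_M)\o(1\ox c\ox 1)\o(\delta\ox 1\ox 1)$. Then the assertion that $e_{M,N}$ is $H$-linear is the equation
\[
e_{M,N}\o(\rho\ox\mu_M)\o(1\ox c\ox 1)\o(\delta\ox 1\ox 1)=\mu_N\o(1\ox e_{M,N})
\]
of maps $H\ox\MN\ox M\ra N$.

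First I would transpose both sides over the variable $M$. By the very definition of $\mu_l$, the right-hand side equals $e_{M,N}\o(\mu_l\ox 1_M)$, so its transpose is $\mu_l$. On the left-hand side the only factor meeting $M$ is $\mu_M$; using the defining property $e_{M,N}\o(1\ox\mu_M)=e_{M,N}\o(\mu_r\ox 1_M)$ of $\mu_r$, I can pull $\mu_M$ out of the evaluation and rewrite the left-hand side as $e_{M,N}\o(T\ox 1_M)$, where
\[
T=\mu_r\o(\rho\ox 1_H)\o(1_H\ox c)\o(\delta\ox 1_{\MN}):H\ox\MN\ra\MN.
\]
Since transposition is a bijection, the proposition reduces to the single identity $T=\mu_l$.

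It then remains to prove $T=\mu_l$. Here I would substitute the formula for $\rho$, so that $T$ is expressed entirely in terms of $\delta$, $\nu$, $c$ and the commuting actions $\mu_l,\mu_r$ (the bimodule structure being the one already noted before the statement). Tracking the two tensorands produced by the outer $\delta$: one of them is comultiplied again inside $\rho$ and fed, through $\nu$, into a right action, while the other is delivered to the outer $\mu_r$. Coassociativity lets me present all three resulting $H$-factors uniformly via the iterated comultiplication; associativity of the right action then collects the last two into a single $\mu_r$ along $\nu(h_{(2)})\,h_{(3)}$, the antipode axiom $\mu\o(\nu\ox 1)\o\delta=\eta\o\epsilon$ collapses this to $\eta\epsilon$, and the counit axiom together with the commuting of $\mu_l$ and $\mu_r$ leaves exactly $\mu_l$. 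Conceptually this is no accident: the identity $T=\mu_l$ is precisely the statement that $\rho$ makes $\MN$ the internal hom $[M,N]$ in the category of left $H$-modules, with $e_{M,N}$ as its evaluation.

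The main obstacle will be the braiding bookkeeping. All of the rearrangements above are immediate in the strict symmetric (Sweedler-notation) picture, but in the genuinely braided setting each interchange of an $H$-factor with $\MN$, or of one $H$-factor with another, is a specific instance of $c$, and the collapses must be justified by naturality of $c$ and the hexagon identities rather than by merely permuting symbols. I would therefore organise the computation as one large diagram in $\C$ whose regions are instances of coassociativity, the bimodule axioms, braiding naturality and the hexagons, together with the single uses of the antipode and counit axioms, so that the symbolic cancellation above is realised by genuinely commuting cells.
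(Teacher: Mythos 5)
Your argument is essentially the paper's own proof: both expand the restricted action via coassociativity into three $H$-factors, braid them into position, collapse $\nu(h_{(2)})h_{(3)}$ by the antipode and counit axioms, and finish with the bimodule axioms and the defining squares of $\mu_l$ and $\mu_r$, organised as one large commuting diagram. Your preliminary transposition over $M$ (reducing to $T=\mu_l$ as maps $H\ox\MN\ra\MN$) is only a repackaging of the paper's use of those two defining squares inside its diagram, so the proofs coincide in substance.
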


\begin{proof}
By the definition of $\mu_l$ and $\mu_r$ the following two diagrams commute.
\[
\xygraph{{H \ox \MN \ox M}="1"
    [d(1.2)] {\MN \ox M}="2"
    [r(2.4)] {N}="3"
    [u(1.2)] {H \ox N}="4" 
 "1":"2" _-{\mu_l \ox 1}
 "2":"3" ^-{e}
 "1":"4" ^-{1 \ox e}
 "4":"3" ^-{1 \ox e}}
\qquad
\xygraph{{\MN \ox H \ox M}="1"
    [d(1.2)] {\MN \ox M}="2"
    [r(2.4)] {N}="3"
    [u(1.2)] {\MN \ox M}="4" 
 "1":"2" _-{\mu_r \ox 1}
 "2":"3" ^-{e}
 "1":"4" ^-{1 \ox \mu}
 "4":"3" ^-{e}}
\]
Using these facts it is possible to see that the diagram (where we have
dropped the ``$\ox$'')
\[
\scalebox{1}{
\xygraph{{H ~ \MN ~ M}="1"
    [d(2)] {H ~ H ~ H ~ \MN ~ M}="2"
    [d(2)] {H ~ \MN ~ H ~ H ~ M}="3"
    [r(4)] {H ~ \MN ~ H ~ H ~ M}="4"
    [r(4)] {\MN ~ H ~ M}="5"
    [u(2)] {\MN ~ M}="6"
    [u(2)] {M}="7"
 "1"[d(1)r(1.5)]{H ~ H ~ \MN ~ M}="8"
    [d(2)r(0.5)]{H ~ \MN ~ H ~ M}="9"
 "4"[u(2)r(0)]{H ~ \MN ~ H ~ M}="11"
    [u(2)l(0)]{H ~ \MN ~ M}="10"
    [r(2)]{H ~ M}="13"
 "5"[u(2)l(1.5)]{\MN ~ M}="12"
"1":"2" _-{\delta^3 ~ 1 ~ 1}
"1":"8" ^-{\delta ~ 1 ~ 1}
"1":"10" ^-1
"2":"3" _-{1 ~ c_{HH,\MN} ~ 1}
"3":"4" _-{1 ~ 1 ~ \nu ~ 1 ~ 1}
"4":"5" _-{\mu_l ~ 1 ~ \mu}
"4":"11" _-{1 ~ 1 ~ \mu ~ 1}
"5":"6" _-{\mu_r ~ 1}
"5":"12" ^-{1 ~ \mu}
"6":"7" _-{e}
"8":"9" _-{1 ~ c ~ 1}
"9":"3" _-{1 ~ 1 ~ \delta ~ 1}
"9":"10" ^-{1 ~ 1 ~ \epsilon ~ 1}
"10":"11" ^-{1 ~ 1 ~ \eta ~ 1}
"10":"12" ^-{\mu_l ~ 1}
"10":"13" ^-{1 ~ e}
"11":"12" ^-{\mu_l ~ \mu}
"13":"7" ^-{\mu}}}
\]
commutes and expresses that $e_{M,N}$ is a morphism of $H$-modules.
\end{proof}

That $e_{A,B}:S(A \ox B) \ox A \ra SB$ becomes a morphism of $H$-modules
follows by choosing $M=A$ and $N= {}^B \! D$.

%=========================================================================%
\subsection{The remainder of the axioms} \label{subsec-remainder}
%=========================================================================%

It is left to show that the remainder of the axioms hold. Axioms~1 and~2 may
respectively be expressed in terms of a monad as follows:
\[
    \vcenter{\xygraph{{ST}="1"
        [r(2)] {S}="2"
    "1"[dr] {TST}="3"
    "1":"2" ^-{S\eta}
    "1":"3" _-{\eta_{ST}}
    "3":"2" _-\nu}}
\qquad\qquad
    \vcenter{\xygraph{{TTST}="1"
        [d(1.2)] {TTSTT}="2"
        [r(3)] {TST}="3"
        [u(1.2)] {S}="4"
    "1"[r(1.7)] {TST}="5"
    "1":"5" ^-{\mu_{ST}}
    "1":"2" _-{TTS\mu}
    "2":"3" ^-{T\nu_{T}}
    "3":"4" _-\nu
    "5":"4" ^-\nu}}
\]
The diagram
\[
\xygraph{{S(H \ox X)}="1"
    [r(3)] {H \ox S(H \ox X)}="2"
    [d(1.2)] {S(H \ox X) \ox H}="3"
    [d(1.2)] {S(H \ox X) \ox H}="4"
    [l(3)] {SX}="5"
 "1":"2" ^-{\eta \ox 1}
 "1":"3" ^-{1 \ox \eta}
 "1":"4" _-{1 \ox \eta}
 "1":"5" _-{S(\eta \ox 1)}
 "2":"3" ^-c
 "3":"4" ^-{1 \ox \nu}
 "4":"5" _-e}
\]
shows that Axioms~1 holds and Axiom~2 is seen to be satisfied since the
following diagram commutes (we have dropped the ``$\ox$'' symbol).
\[
\xygraph{{H \> H \> S(H \> X)}="1"
    [d(1.2)] {H \> H \> S(H \> H \> X)}="2"
    [d(1.2)] {H \> S(H \> H \> X) \> H}="3"
    [d(2.4)] {H \> S(H \> H \> X) \> H}="4"
    [r(2.7)] {H \> S(H \> X)}="5"
    [r(2.4)] {S(H \> X) \> H}="6"
    [r(2.4)] {S(H \> X) \> H}="7"
    [u(1.2)] {SX}="8"
 "1"[r(3)] {S(H \> X) \> H \> H}="9"
    [r(4.5)] {S(H \> X) \> H }="10"
    [d(2.4)] {S(H \> X) \> H}="12"
 "9"[d(1.2)] {S(H \> H \> X) \> H \> H}="13"
    [d(1.2)] {S(H \> H \> X) \> H \> H}="14"
    [d(1.2)] {S(H \> H \> X) \> H \> H}="15"
 "9"[d(1.2)r(2.5)] {S(HX) HH}="11"
    [d(1.2)] {S(H \> X) \> H \> H}="16"
    [d(1.2)] {S(H \> H \> X) \> H \> H}="17"
"1":"9" ^-{c_{HH,S(HX)}}
"1":"2"_-{1 \> 1 \> S(\mu \> 1)}
"2":"13"^-{c_{HH,S(HHX)}}
"2":"3"_-{1 \> c}
"3":"14"^-{c_{H,S(HHX)X}}
"3":"4"_-{1 \> 1 \> \nu}
"4":"15"|-{c_{H,S(HHX)H}}
"4":"5"_-{1 \> e}
"5":"6"_-{c}
"6":"7"_-{1 \> \nu}
"7":"8"_-{e}
"9":"13" _-{S(\mu \> 1) \> 1 \> 1}
"9":"10" ^-{1 \> \mu}
"9":"11" ^-{1 \> c}
"10":"12" ^-{1 \> \nu}
"11":"16" ^-{1 \> \nu \> \nu}
"12":"8" ^-{e}
"13":"14" _-{1 \> c}
"14":"15" _-{1 \> \nu \> 1}
"15":"6" ^-{e \> 1}
"15":"17" ^-{1 \> 1 \> \nu}
"16":"12" ^-{1 \> \mu}
"16":"17" ^-{S(\mu \> 1) \> 1 \> 1}
"17":"7" ^-{e \> 1}
"17":"8" ^-{e}}
\]
Similar diagrams show that $S'$ and $\nu'$ also satisfy Axioms~1 and~2.

Axiom~4 expressed in terms of a monad is
\[
\xygraph{{TI}="1"
    [d(1.2)] {TSSI}="2"
    [r(2)] {TSTSTI}="3"
    [r(2.4)] {TSTSI,}="4"
    [u(1.2)] {SSI}="5"
 "1"[r(2.2)] {I}="6"
"1":"6" ^-{\psi_0}
"1":"2" _-{Tn_I}
"2":"3" ^-{TS\nu_I}
"3":"4" ^-{TSTS\psi_0}
"4":"5" ^-{\nu_{SI}}
"6":"5" ^-{n_I}}
\]
and that this holds may be seen from the following diagram (where we have
again dropped the ``$\ox$'').
\[
\xygraph{{H}="1"
    [r(2.25)]{I}="15"
 "1"[d(1.2)]{H \, SSI}="2"
    [d(1.8)l(1.6)]{H \, S(SH \, H)}="3"
 "2"[d(2.4)]{H \, S(SH \, H)}="10"
 "2"[d(2.4)r(2.3)]{H \, S(SH \, H)}="11"
"10"[d(1.2)]{H \, S(SH \, H)}="4"
"11"[d(1.2)]{H \, S(SH \, H)}="5"
    [u(1.2)r(2.2)]{H \, S(H \, SI)}="6"
    [u(1.2)r(1.6)]{S(H \, SI) \, H}="7"
    [u(1.2)]{S(H \, SI) \, H}="8"
"11"[u(2.4)]{SSI \, H}="14"
 "6"[u(1.2)]{H \, S(H \, SI)}="12"
    [u(1.2)]{S(H \, SI) \, H}="13"
    [u(1.2)]{SSI}="9"
"1":"2" _-{1 \, n_I}
"1":"15" ^-{\epsilon}
"2":"3" _-{1 \, Se}
"2":"6" |-{1 \, S(\epsilon \, 1)}
"2":"10" ^-{1 \, Se}
"2":"12" |-{1 \, S(\epsilon \, 1)}
"2":"14" ^-{c}
"3":@/_9pt/"4" _-{1 \, S(1 \, \nu)}
"4":"5" _-{1 \, Sc}
"4":"11" _-{1 \, S(S\epsilon \, 1)}
"5":@/_9pt/"6" _-{1 \, S(1 \, S\epsilon)}
"6":@/_7pt/"7" _-{c}
"6":"12" |-{1 \, S(\nu \, 1)}
"7":"8" _-{1 \, \nu}
"7":"13" |-{S(\nu \, 1) \, 1}
"8":@/_7pt/"9" _-{e}
"10":"4" |-{1 \, S(S\nu \, 1)}
"10":"11" ^-{1 \, S(S\epsilon \, 1)}
"11":"6" ^-{1 \, Sc}
"12":"13" _-c
"13":"9" _-e
"14":"13" ^-{S(\epsilon \, 1) \, 1}
"14":"9" ^-{1 \, \epsilon}
"15":"9" ^-{n_I}
}
\]

Axiom~5 may be expressed in terms of a monad as
\[
\xygraph{{T}="1"
    [d(1.2)]{TSS'}="2"
    [r(2)]{TSTS'T.}="3"
    [u(1.2)]{SS'T}="4"
"1":"4" ^-{\alpha_T}
"1":"2" _-{T \alpha}
"2":"3" ^-{TS\nu'}
"3":"4" _-{\nu_{S'T}}}
\]
Note that the commutativity of the diagram
\[
\xygraph{{\C(C,S'(A \ox B))}="1"
    [d(1.2)]{\C(B \ox C,S'A)}="2"
    [r(2.8)]{\C(A,S(B \ox C))}="3"
    [u(1.2)]{\C(A \ox B,SC)}="4"
"1":"4" ^-{\sigma}
"1":"2" _-{\omega'}
"2":"3" _-{\sigma}
"3":"4" _-{\omega}} 
\]
implies that
\[
\xygraph{{A \ox B}="1"
    [d(1.2)]{SS'A \ox B}="2"
    [r(3.2)]{S(B \ox S'(A \ox B)) \ox B}="3"
    [u(1.2)]{SS'(A \ox B)}="4"
"1":"4" ^-\alpha
"1":"2" _-{\alpha \ox 1}
"2":"3" ^-{Se' \ox 1}
"3":"4" _-e}
\]
commutes, and therefore, that the following diagram expressing Axiom~5
commutes.
\[
\scalebox{0.95}{\xygraph{{H \ox X}="1"
    [d(1.8)] {H \ox SS'X}="2"
    [d(1.8)] {H \ox S(H \ox S'(X \ox H))}="3"
    [r(7)] {H \ox S(H \ox S'(H \ox X))}="4"
    [u(1.2)] {S(H \ox S'(H \ox X)) \ox H}="5"
    [u(1.2)] {S(H \ox S'(H \ox X)) \ox H}="6"
    [u(1.2)] {SS'(H \ox X)}="7"
 "1"[r(2)] {X \ox H}="8"
    [r(2)] {SS'(X \ox H)}="9"
 "2"[u(0.5)r(2)] {SS'X \ox H}="10"
 "3"[u(1.1)r(3.5)]{S(X \ox S'(X \ox H)) \ox H}="11"
"1":"2" _-{1 \ox \alpha}
"1":"8" ^-{c}
"2":"3" _-{1 \ox S'e}
"2":"10" ^-{c}
"3":"4" _-{1 \ox S(\nu^{-1} \ox S'c^{-1})}
"3":"11" _-{c}
"4":"5" _-{c}
"5":@<-6pt>"6" _-{1 \ox \nu}
"5":@<0pt>"6" ^-{S(\nu \ox 1) \ox 1}
"6":"7" _-{e}
"8":"9" ^-\alpha
"8":"10" ^-{\alpha \ox 1}
"9":"7" ^-{SS'c^{-1}}
"10":"11" _-{Se' \ox 1}
"11":"6" ^-{S(1 \ox S'c^{-1}) \ox 1 \quad}
"11":"9" ^-{e}}}
\]
Axiom~6 may be handled similarly. Thus $T = H \ox -$ is a $*$-autonomous monad.

%=========================================================================%

\end{document}